\newcommand*{\Aa}{\ensuremath{\mathbb A}\xspace}
\newcommand*{\Bc}{\ensuremath{\mathcal B}\xspace}
\newcommand*{\C}{\ensuremath{\mathbb C}\xspace}
\newcommand*{\N}{\ensuremath{\mathbb N}\xspace}
\newcommand*{\Ooo}{\ensuremath{\mathcal O}\xspace}
\newcommand*{\PP}{\ensuremath{\mathbb P}\xspace}
\newcommand*{\Rs}{\ensuremath{\mathcal R}\xspace}
\newcommand*{\U}{\ensuremath{\mathcal U}\xspace}
\newcommand*{\X}{\ensuremath{\mathcal X}\xspace}
\newcommand*{\Y}{\ensuremath{\mathcal Y}\xspace}
\newcommand*{\Z}{\ensuremath{\mathbb Z}\xspace}
\DeclareMathOperator{\Aut}{Aut}
\DeclareMathOperator{\Br}{Br}
\DeclareMathOperator{\br}{br}
\DeclareMathOperator{\HOM}{\underline{Hom}}
\DeclareMathOperator{\im}{Im}
\DeclareMathOperator{\Ker}{Ker}
\DeclareMathOperator{\Mon}{Mon}
\DeclareMathOperator{\Pic}{Pic}
\DeclareMathOperator{\PSL}{PSL}
\DeclareMathOperator{\pr}{pr}
\DeclareMathOperator{\SL}{SL}
\DeclareMathOperator{\Sp}{Sp}
\DeclareMathOperator{\Spec}{Spec}
\DeclareMathOperator{\Specb}{\mathbf{Spec}}
\let\ph\varphi
\newcommand*{\lst}[3][1]{\ensuremath{#2_{#1}, \ldots, #2_{#3}}\xspace}
\newtheorem{proposition}{Proposition}[section]
\newtheorem{lemma}[proposition]{Lemma}
\newtheorem{cor}[proposition]{Corollary}
\newtheorem{convention}[proposition]{Convention}
\theoremstyle{remark}
\newtheorem{note}[proposition]{Remark}
\theoremstyle{definition}
\newtheorem{definition}[proposition]{Definition}
\newtheorem{Not}[proposition]{Notation}
\author{Serge Lvovski}
\address{National Research University Higher School of Economics,\hfil\break
  Moscow, Russia\hfil\break
Federal Scientific Centre Science Research Institute 
of System Analysis at  Russian Academy of Science 
(FNP FSC SRISA  RAS)
} 
\email{lvovski@gmail.com}
\title{On monodromy in families of elliptic curves over \C}
\keywords{Monodromy, elliptic curve, hyperelliptic curve,
  $j$-invariant, braid monodromy, Del Pezzo surface}
\subjclass{14D05, 14H52, 14J26}
\thanks{The study has been funded by the Russian Academic Excellence
  Project '5-100'.}
\begin{document}

\begin{abstract}
We show that if we are given a smooth non-isotrivial family of
elliptic curves over~\C with a smooth base~$B$ for which the general
fiber of the mapping $J\colon B\to\Aa^1$ (assigning $j$-invariant of
the fiber to a point) is connected, then the monodromy group of the
family (acting on $H^1(\cdot,\Z)$ of the fibers) coincides with
$\SL(2,\Z)$; if the general fiber has $m\ge2$ connected components,
then the monodromy group has index at most~$2m$ in $\SL(2,\Z)$. By
contrast, in \emph{any} family of hyperelliptic curves of genus
$g\ge3$, the monodromy group is strictly less than $\Sp(2g,\Z)$.

Some applications are given, including that to monodromy of hyperplane
sections of Del Pezzo surfaces.
\end{abstract}

\maketitle

\section*{Introduction}

It is believed that if fibers in a family of algebraic varieties
``vary enough'' then the monodromy group acting on the cohomology of
the fiber should be in some sense big. Quite a few results have been
obtained in this direction. See for example \cite{CojocaruHall} for
families of elliptic curves, \cite{Hall2008} for families of
hyperelliptic curves, \cite{Arias} for families of abelian varieties
(see also~\cite{Hall2011} for abelian varieties in the arithmetic
situation). In the cited papers cohomology means ``\'etale cohomology
with finite coefficients''. In this paper we address the question of
``big monodromy'' for families of elliptic curves over~\C and singular
cohomology.

The main result of the paper (Proposition~\ref{numcompts}) asserts
that {\em if $\pi\colon\X\to B$ is a smooth non-isotrivial family of
  elliptic curves over~\C and if the general fiber of its ``$J$-map''
  $J_\X\colon B\to\Aa^1$ \textup(assigning to each point of the base
  the $j$-invariant of the fiber\textup) is connected, then the
  monodromy group of the family \X is the entire group $\SL(2,\Z)$,
  and if the general fiber has $m\ge2$ connected components, then the
  monodromy group of the family \X is a subgroup of index at most $2m$
  in $\SL(2,\Z)$.} Here, by monodromy group we mean that acting on
$H^1(\cdot,\Z)$ of the fiber.

An immediate consequence of this proposition is that, \emph{in any
non-isotrivial family of elliptic curves, the monodromy group has
finite index in} $\SL(2,\Z)$ (Corollary~\ref{finite.index}). This
requires some comments.

The above assertion is similar to a well-known result about elliptic
curves over number fields, viz. to Serre's Theorem~3.2 from Chapter~IV
of~\cite{Serre:book}. It is possible that one can prove our
Corollary~\ref{finite.index} by imitating, \emph{mutatis mutandis},
Serre's proof of this theorem or even derive it from Serre's theorem
or similar arithmetical results. One merit of the approach presented
in this paper is that the proofs are very simple and elementary. One
should add that the similarity between arithmetic and geometric
situations is not absolute. For example, Theorem 5.1
from~\cite{Hall2008} could suggest that, over \C, the monodromy group
for some families of hyperelliptic curves of genus $g$ should be the
entire~$\Sp(2g,\Z)$. However, as we show in
Proposition~\ref{mono:hell:cor}, for \emph{any} family of
hyperelliptic curves of genus $g\ge3$ over~\C the monodromy
group acting on $H^1(\cdot,\Z)$ of the fiber is a proper subgroup of
$\Sp(2g,\Z)$.

Our main result has three simple consequences, which are presented in
Section~\ref{sec:appl}. First, any smooth (i.e., without degenerate
fibers) family of elliptic curves over a smooth base with commutative
fundamental group, must be isotrivial
(Proposition~\ref{isotrivial}). Second, for non-isotrivial families we
obtain an upper bound on the index of the monodromy group in
$\SL(2,\Z)$ (Proposition~\ref{lower_bound}). Third, in the case of
smooth elliptic surfaces we use Miranda's results from~\cite{Miranda}
to obtain an upper bound on the index of monodromy group in terms of
singular fibers (it turns out that only fibers of the types~$\mathrm
I_n$ and~$\mathrm I^*_N$ count); see Proposition~\ref{ell_surf}.

In Section~\ref{sec:h-ell} we prove the above mentioned result about
families of hyperelliptic curves of genus $3$ or higher.

In Section~\ref{sec3}, we derive from our main result that the
hyperplane monodromy group of a smooth Del Pezzo surface (or, for Del
Pezzos of degree $2$, the monodromy group acting on $H^1(\cdot,\Z)$ of
smooth elements of the anticanonical linear system) is the entire
$\SL(2,\Z)$ (Proposition~\ref{delPezzo}). I realize that it is not
the only way to obtain this result. Observe that, in view of
Proposition~\ref{mono:hell:cor}, Proposition~\ref{delPezzo} cannot be
extended to surfaces with hyperelliptic hyperplane sections.

Sections~\ref{sec1} through~\ref{sec:twists} are devoted to auxiliary
material (Proposition~\ref{quad_tw.=>mono} may be of some independent
interest).

\subsection*{Acknowledgements}

I am grateful to Yu.~Burman, Andrey Levin, Sergey Rybakov, Ossip
Schwarzman, and Yuri Zarhin for useful discussions.

\subsection*{Notation and conventions}

All our algebraic varieties are defined over~\C and reduced, so they
are essentially identified with their sets of closed points; the only
exception is the discussion of the notion of quadratic twist in
Section~\ref{sec:twists}. If $X$ is an algebraic variety, then
$X_{\mathrm{sm}}$ is its smooth locus and $X_{\mathrm{sing}}$ is its
singular locus.

When we say ``a general $X$ has property~$Y$'', this always means
``property $Y$ holds for a Zariski open and dense set of $X$'s''. The
word ``generic'' is used in the scheme-theoretic sense.

If $B$ is an algebraic variety and $\pi\colon \X\to B$ is a proper and
flat morphism such that a general fiber of $\pi$ is, say, a smooth
curve of genus~$1$, we will say that $\pi$ is a \emph{family of curves
  of genus~$1$}. If, in addition, the morphism $f$ is smooth, we will
say that $\pi$ (or just \X if there is no danger of confusion) is a
\emph{smooth family}, or \emph{a family of smooth varieties}.  If
$\pi\colon \X\to B$ is a family over $B$ and $f\colon B'\to B$ is a
morphism, then by $p'\colon f^*\X\to B'$ we mean the pullback
of~\X along~$f$.

By $\pi_1$ of an algebraic variety over~\C we always mean fundamental
group in the classical (complex) topology.

As usual, we put
$\Gamma(2)=\left\{A\in\SL(2,\Z)\colon A\equiv I\pmod2\right\}$,
where $I$ is the identity matrix.

Finally, we fix some terminology and notation concerning elliptic curves.

Following Miranda~\cite{Miranda}, we distinguish between curves of
genus~$1$ and elliptic curves: by elliptic curve over a field $K$ we
mean a smooth projective curve over~$K$ of genus~$1$ with a
distinguished $K$-rational point.

Similarly, by a \emph{smooth family of curves of genus $1$} we will mean a
smooth family $\pi\colon \X\to B$ such that its fibers are
curves of genus~$1$, and by a \emph{smooth family of elliptic curves} we mean
a pair $(\X,s)$, where $\X\to B$ is a smooth family of curves of genus~$1$
and $s\colon B\to\X$ is a section.

To each curve~$C$ of genus~$1$ over a filed~$K$ one can assign its
\emph{$j$-invariant} $j(C)\in K$; recall that if $C$ is (the smooth
projective model of) the curve defined by the Weierstrass equation
$y^2=x^3+px+q$, then
\begin{equation}\label{j:def}
j(C)=1728\cdot\frac{4p^3}{4p^3+27q^2}.
\end{equation}
Two curves of genus~$1$ over~\C are isomorphic if and only if their
$j$-invariants are equal. 

We say that a family over $B$ is isotrivial if it becomes trivial
after a pullback along a generically finite morphism $B_1\to B$. For
families of curves of genus~$1$ this is equivalent to the condition
that $j$-invariants of all fibers are the same.

\section{Generalities on monodromy groups}\label{sec1}

Suppose that $B$ is an irreducible variety and $\pi\colon \X\to B$ is a
family of smooth varieties.

If $b\in B_{\mathrm{sm}}$, $k\in\N$, and $G$ is an abelian group, then
the fundamental group $\pi_1(B_{\mathrm{sm}},b)$ acts on
$H^k(p^{-1}(b),G)$.

\begin{definition}
The image (corresponding to this action) of $\pi_1(B_{\mathrm{sm}},b)$
in $\Aut(H^k(p^{-1}(b),G))$ will be called \emph{monodromy group} of
the family~\X at~$b$ and denoted $\Mon(\X,b)$ (we suppress the
mention of $k$ and $G$; there will be no danger of confusion).
\end{definition}

Since $B$ is irreducible, $B_{\mathrm{sm}}$ is path connected. 
Hence, if we fix once and for all the group
$A=\Aut(H^k(p^{-1}(b_0),G))$ for some $b_0\in B_{\mathrm{sm}}$, then
all the groups $\Mon(\X,b)$ define the same conjugacy class of
subgroups of~$A$; this class (or, abusing the language, any subgroup
belonging to this class) will be denoted by $\Mon(\X)$.

In the sequel we will be working with families of smooth curves of
genus $g$ (in most cases $g$ will be equal to~$1$) as fibers and
monodromy action on $H^1$ of the fiber. Since monodromy preserves the
intersection form, the subgroups $\Mon(\X)$, where \X is such a
family, will be defined up to an inner automorphism of the
group~$\Sp(2g,\Z)$ ($\SL(2,\Z)$ if $g=1$).

\begin{convention}\label{conv:mono}
If $\pi\colon \X\to B$ is a non-smooth family, then by $\Mon(\X)$ we
mean $\Mon(\X|_U)$, where $U\subset B$ is the Zariski open subset over
which $\pi$ is smooth.  
\end{convention}

Below we list some simple properties of monodromy groups.

\begin{proposition}\label{prop:epi}
Suppose that $B$ is an irreducible variety, $U\subset B$ is a
non-empty Zariski open subset, and \X is a smooth family
over~$B$. Then $\Mon(\X|_U)=\Mon(\X)$.
\end{proposition}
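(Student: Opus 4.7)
The plan is to reduce the claim to a standard fact about fundamental groups of complex manifolds minus closed analytic subsets.

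First I would unwind definitions. Write $V=B_{\mathrm{sm}}$; then $V$ is a (connected) complex manifold and $U\cap V$ is the complement in $V$ of the closed algebraic subset $V\setminus U=V\cap(B\setminus U)$. Since $B$ is irreducible and $U$ is a non-empty Zariski open subset, $V\setminus U$ is a proper closed subvariety of $V$, hence has complex codimension at least~$1$, i.e.\ real codimension at least~$2$. Pick a base point $b\in U\cap V$, which is also a valid base point for computing both $\Mon(\X|_U,b)$ and $\Mon(\X,b)$, and observe that both monodromy representations are obtained from the same local system $R^k\pi_*G$ on $V$ by restricting the action of $\pi_1$ to the respective base; in particular the action of $\pi_1(U\cap V,b)$ factors through its image in $\pi_1(V,b)$.

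Next I would invoke the following classical fact: if $M$ is a connected complex manifold and $Z\subset M$ is a closed analytic subset of complex codimension $\ge1$, then the inclusion $M\setminus Z\hookrightarrow M$ induces a surjection on fundamental groups. (A loop in $M$ based at a point outside $Z$ can be perturbed to miss $Z$ by transversality, since $Z$ has real codimension $\ge2$; and a homotopy of such loops can likewise be perturbed off $Z$, since a $2$-chain generically meets $Z$ in real codimension $\ge0$, hence can be made disjoint from $Z$ by a small perturbation.) Applying this to $M=V$ and $Z=V\setminus U$ yields that
\[
\pi_1(U\cap V,b)\twoheadrightarrow \pi_1(V,b)
\]
is surjective.

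Since both $\Mon(\X|_U,b)$ and $\Mon(\X,b)$ are defined as the image of the corresponding fundamental group in $\Aut(H^k(\pi^{-1}(b),G))$ under the \emph{same} representation (the monodromy of the local system $R^k\pi_*G$, which is defined on all of $V$ because $\X$ is smooth over $B$), the surjectivity above forces the two images to coincide. Passing to the conjugacy class gives $\Mon(\X|_U)=\Mon(\X)$.

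The only nontrivial ingredient is the surjectivity on $\pi_1$ after removing a complex-codimension-$1$ closed subvariety; I would expect this to be the main step to cite carefully, but it is entirely standard (transversality in real codimension~$2$) and requires no elliptic-curve input.
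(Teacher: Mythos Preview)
Your proof is correct and follows exactly the paper's approach: both reduce to the surjectivity of $\pi_1(U\cap B_{\mathrm{sm}},b)\to\pi_1(B_{\mathrm{sm}},b)$, which the paper simply cites while you sketch the transversality argument. One small remark: in your parenthetical you also sketch perturbing \emph{homotopies} off~$Z$, but that is neither needed (only surjectivity is used) nor generally available at real codimension~$2$; drop that clause and the argument is clean.
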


\begin{proof}
The result follows from the fact that, for any $b\in U\cap
B_{\mathrm{sm}}$, the natural homomorphism $\pi_1(U\cap
B_{\mathrm{sm}},b)\to \pi_1(B_{\mathrm{sm}},b)$ is epimorphic (see for
example~\cite[0.7(B)~ff.]{FulLar}).
\end{proof}

\begin{proposition}\label{prop:pullback}
Suppose that $B'$ and $B$ are smooth irreducible varieties and \X is a smooth
family over~$B$. If $f\colon B'\to B$ is a dominant morphism such that
a general fiber of $f$ has $m$ connected components, then
$\Mon(f^*\X)$ is conjugate to a subgroup of $\Mon(\X)$, of index at
most~$m$.
\end{proposition}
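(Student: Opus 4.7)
The plan is to translate the statement about monodromy groups into a statement about fundamental groups, and then to use the long exact homotopy sequence of a fibration.

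For $b \in B$ and $b' \in f^{-1}(b)$, the fiber $(f^*\X)_{b'}$ coincides canonically with $\X_b$. Under this identification, the monodromy action of a loop $\gamma'$ in $B'$ based at $b'$ on $H^k((f^*\X)_{b'}, G)$ coincides with the action of the loop $f \circ \gamma'$ in $B$ based at $b$ on $H^k(\X_b, G)$. Writing $\rho\colon \pi_1(B, b) \to \Aut(H^k(\X_b, G))$ for the monodromy representation of $\X$ (whose image is $\Mon(\X, b)$), we therefore have
\[
\Mon(f^*\X, b') \;=\; \rho\bigl(f_*(\pi_1(B', b'))\bigr) \;\subset\; \rho(\pi_1(B, b)) \;=\; \Mon(\X, b),
\]
and the index of the left-hand side in the right-hand side is at most the index of $f_*(\pi_1(B', b'))$ in $\pi_1(B, b)$. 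So the task reduces to showing that this last index is at most $m$.

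To do so, I first reduce to a situation where $f$ is a topological fiber bundle. By generic smoothness combined with generic topological triviality of algebraic morphisms (Verdier's theorem, or Thom's first isotopy lemma applied to a suitable Whitney stratification), there is a Zariski open dense $U \subset B$ such that $f|_{f^{-1}(U)}\colon f^{-1}(U) \to U$ is a locally trivial $C^\infty$ fiber bundle whose typical fiber $F$ has exactly $m$ connected components. By Proposition~\ref{prop:epi}, replacing $B$ by $U$ and $B'$ by $f^{-1}(U)$ (which is open in the irreducible $B'$ and hence still connected) does not affect either monodromy group, so we may assume that $f$ itself is such a bundle. The long exact homotopy sequence for $F \hookrightarrow B' \xrightarrow{f} B$, pointed at $b'$, $b$, and the component of $b'$ in $F$, then reads
\[
\pi_1(B', b') \xrightarrow{f_*} \pi_1(B, b) \xrightarrow{\partial} \pi_0(F) \longrightarrow \pi_0(B') = \{*\}.
\]
Exactness at $\pi_0(F)$ says that the monodromy action of $\pi_1(B, b)$ on the $m$-element set $\pi_0(F)$ is transitive; exactness at $\pi_1(B, b)$ identifies $f_*(\pi_1(B', b'))$ with the stabilizer of the component containing $b'$. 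Thus $[\pi_1(B, b) : f_*(\pi_1(B', b'))] = m$, which completes the argument.

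The main obstacle is the reduction to a locally trivial fibration in the middle step: since $f$ is not assumed proper, Ehresmann's theorem does not apply directly, and one must appeal to generic topological triviality of algebraic morphisms (alternatively, pass to a smooth proper compactification of $f$ obtained via Hironaka, apply Ehresmann there over a Zariski open, and restrict). Everything else — the matching of the pullback monodromy with $\rho \circ f_*$ and the reading of the homotopy exact sequence — is routine.
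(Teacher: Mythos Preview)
Your proof is correct and follows essentially the same route as the paper's own argument: both invoke Verdier's generic topological triviality together with Proposition~\ref{prop:epi} to reduce to a locally trivial bundle, and then bound the index of $f_*\pi_1$ in $\pi_1$ by the number of fiber components. The only difference is cosmetic: where the paper simply asserts that connectedness of $f^{-1}(U)$ forces $f'_*(\pi_1(f^{-1}U))$ to have index at most~$m$, you spell this out via the tail of the long exact homotopy sequence, obtaining that the index is in fact exactly~$m$ (the possible drop to ``at most~$m$'' then occurs only upon passing to the image under~$\rho$).
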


\begin{cor}\label{conn.fiber:gen}
Suppose that $B'$ and $B$ are smooth irreducible varieties and \X is a smooth
family over~$B$. If $f\colon B'\to B$ is a dominant morphism such that
a general fiber of $f$ is connected, then $\Mon(f^*\X)$ is conjugate to
$\Mon(\X)$.
\end{cor}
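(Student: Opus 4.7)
The plan is to invoke Proposition~\ref{prop:pullback} directly, specialized to the case $m=1$. All the hypotheses of that proposition are already present in the statement of the corollary: $B'$ and $B$ are smooth irreducible, $\X$ is a smooth family over~$B$, and $f\colon B'\to B$ is dominant. The supplementary hypothesis here, that a general fiber of $f$ is connected, is nothing more than the assertion that the integer $m$ of Proposition~\ref{prop:pullback} equals~$1$.

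Therefore Proposition~\ref{prop:pullback} immediately tells us that $\Mon(f^*\X)$ is conjugate to a subgroup of $\Mon(\X)$ of index at most~$1$. A subgroup of index~$1$ coincides with the ambient group, and so $\Mon(f^*\X)$ is in fact conjugate to the whole of $\Mon(\X)$, as claimed.

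Since the corollary is a pure specialization of the previous proposition, there is no genuine obstacle to anticipate at this stage: all the real content, in particular the control of the image of $\pi_1(B'_{\mathrm{sm}})\to\pi_1(B_{\mathrm{sm}})$ under a dominant morphism with possibly disconnected fibers, has been handled in the proof of Proposition~\ref{prop:pullback}. The only point worth noting is that the phrase ``conjugate to a subgroup of index at most~$m$'' in the statement of Proposition~\ref{prop:pullback} genuinely collapses to conjugacy with the full group when $m=1$, which is immediate.
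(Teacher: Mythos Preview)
Your proposal is correct and is exactly the paper's approach: the corollary is stated immediately after Proposition~\ref{prop:pullback} with no separate proof, since it is just the special case $m=1$ of that proposition.
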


\begin{proof}[Proof of Proposition~\ref{prop:pullback}]
It follows from~\cite[Corollary 5.1]{Verdier} and the algebraic
version of Sard's theorem that there exists a Zariski open non-empty
$U\subset B$ such that all the fibers of $f$ over points of $U$ are
smooth and the induced mapping $f'\colon f^{-1}U \to U$ is a locally
trivial bundle in the complex topology. Proposition~\ref{prop:epi}
implies that $\Mon(\X|_U)=\Mon(\X)$.  Since $f^{-1}(U)$ is (path)
connected, each fiber of this bundle has $m$ connected components, and
the base is locally path connected, $f'_*(\pi_1(f^{-1}U, x))$ is a
subgroup of index at most~$m$ in $\pi_1(U,f(x))$ for any $x\in f^{-1}(U)$.
This implies the proposition.
\end{proof}

\section{Some remarks on 3-braids}\label{sec:braids}

In this section, all topological terms will refer to the classical
(complex) topology.

We begin with some remarks on 3-braids (not claiming to novelty).

Let $\C^{(3)}$ stand for the configuration space of unordered triples
of distinct points in the complex plane. It is well known that
$\pi_1(\C^{(3)})\cong B_3$, where $B_3$ is braid group with $3$
strands. If $(u,v,w)\in\C^{(3)}$ is an unordered triple, we will write
$B_3(u,v,w)$ instead of $\pi_1(\C^{(3)},(u,v,w))$.

For any triple $(u,v,w)\in\C^{(3)}$, we denote by $X_{u,v,w}$ the
elliptic curve which is the smooth projective model of the curve with
equation $y^2=(x-u)(x-v)(x-w)$.  We are going to define a homomorphism
\[
\mu\colon B_3(u,v,w)\to \SL(H^1(X_{u,v,w},\Z))\cong\SL(2,\Z).
\]

To wit, it is well known that any braid $\gamma\in B_3(u,v,w)$ can be
represented by a homeomorphism $\ph_\gamma\colon \C\to\C$ such that
$\ph_\gamma(\{u,v,w\})=\{u,v,w\}$ and $\ph_\gamma$ is identity outside
a bounded set. Putting $\PP^1=\C\cup\{\infty\}\supset\C$, we extend
$\ph_\gamma$ to a homeomorphism from $\PP^1$ to itself by putting
$\ph_\gamma(\infty)=\infty$. If $\pi\colon X_{u,v,w}\to\PP^1\supset\C$
is the morphism induced by the projection $(x,y)\mapsto x$, then there
exists a unique homeomorphism $\tilde\ph_\gamma\colon X_{u,v,w}\to
X_{u,v,w}$ such that $\pi\circ\tilde\ph_\gamma=\ph_\gamma\circ\pi$ and
$\tilde\ph_\gamma=\mathrm{id}$ on $\pi^{-1}(\PP^1\setminus K)$, where
$K\subset\C$ is the compact set outside of which
$\pi_\gamma=\mathrm{id}$. The automorphism
\[
\tilde\ph_\gamma^*\colon H^1(X_{u,v,w},\Z)\to H^1(X_{u,v,w},\Z)
\]
does not depend on the choice of the $\phi_\gamma$ representing
$\gamma$, and we put $\mu(\gamma)=\tilde\ph_\gamma^*$.

\begin{proposition}\label{(AB)^3}
If $\Gamma\in B_3(u,v,w)$ is the braid represented by the
loop in $\C^{(3)}$ defined by the
 formula $t\mapsto (ue^{2\pi it}, ve^{2\pi it}, ve^{2\pi it})$, $t\in
[0;1]$, then $\mu(\Gamma)=\left(\begin{smallmatrix}-1&0\\ 0&-1\end{smallmatrix}
\right)$.
\end{proposition}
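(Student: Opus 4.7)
My plan is to realize $\Gamma$ as the monodromy of an explicit one-parameter family of elliptic curves, trivialize that family by an elementary algebraic substitution, and recognize the resulting automorphism of $X_{u,v,w}$ as the hyperelliptic involution; the latter then acts as $-I$ on $H^1$.

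Pulling back the universal family $y^2=(x-u)(x-v)(x-w)$ over $\C^{(3)}$ along $\Gamma$ yields the one-parameter family
\[
X_t:\ y^2=(x-ue^{2\pi it})(x-ve^{2\pi it})(x-we^{2\pi it}),\qquad t\in[0,1],
\]
with $X_0=X_1=X_{u,v,w}$. The substitution $x=e^{2\pi it}\xi$, $y=e^{3\pi it}\eta$ transforms this equation into $\eta^2=(\xi-u)(\xi-v)(\xi-w)$ and thereby provides an analytic trivialization of $\{X_t\}$ over $[0,1]$. At $t=0$ the trivialization is the identity on $X_{u,v,w}$, whereas at $t=1$ it reads $(x,y)=(\xi,-\eta)$. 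Hence the parallel transport of $(x,y)\in X_{u,v,w}$ along $\Gamma$ lands at $(x,-y)$: the monodromy of the family on the fiber $X_{u,v,w}$ is the hyperelliptic involution $\iota\colon(x,y)\mapsto(x,-y)$. Taking any Weierstrass point as the origin of the group law turns $\iota$ into the inversion $P\mapsto-P$, which acts as $-I$ on $H^1(X_{u,v,w},\Z)$.

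The only step that requires a small check is that the monodromy of the algebraic family $\{X_t\}$ on $H^1$ of the fiber coincides with $\mu(\Gamma)$ as defined in the paper via a compactly supported homeomorphism $\tilde\ph_\Gamma$. This is a general property of families of smooth projective varieties over an interval: any two topological trivializations differ by an isotopy of the fiber and therefore induce the same map on $H^1$. In the concrete situation at hand one may even produce an admissible $\ph_\Gamma$ by cutting off the rotation, namely $\ph_t(z)=e^{2\pi it\rho(|z|)}z$ for a smooth bump $\rho$ equal to $1$ on a disk containing $\{u,v,w\}$ and vanishing outside a larger disk, and verify directly that its lift to $X_{u,v,w}$ agrees with $\iota$ over the region where $\rho\equiv1$. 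I do not anticipate any essential obstacle beyond this routine bookkeeping; the conceptual point is that the factor $e^{3\pi it}$ forced on $y$ by the substitution evaluates to $-1$ at $t=1$, which is precisely the source of the sign $-I$.
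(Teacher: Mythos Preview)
Your proof is correct and takes a genuinely different route from the paper's. The paper argues combinatorially: it fixes the standard generators $A,B$ of $B_3$, observes from the geometry that in a suitable basis of $H_1$ they act by $\left(\begin{smallmatrix}1&x\\0&1\end{smallmatrix}\right)$ and $\left(\begin{smallmatrix}1&0\\y&1\end{smallmatrix}\right)$, uses the braid relation $ABA=BAB$ together with nontriviality of the action to force $xy=-1$, and then computes $(AB)^3=-I$ after identifying $\Gamma=(AB)^3$. Your argument bypasses all of this by writing down the substitution $x=e^{2\pi it}\xi$, $y=e^{3\pi it}\eta$, which trivializes the family globally and exhibits the monodromy as the hyperelliptic involution; that this involution acts as $-I$ on $H^1$ is immediate. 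Your approach is shorter and more conceptual, and the identification of $\mu(\Gamma)$ with the monodromy of the pulled-back family is indeed routine (the paper's $\tilde\varphi_\Gamma$ is by construction the end of an isotopy trivializing the same bundle over $[0,1]$, and any two such trivializations are fiberwise isotopic). The paper's method has the side benefit of nearly determining $\mu$ on the generators $A,B$, but as the paper itself notes, that extra information is not used anywhere.
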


\begin{proof}
To prove the proposition, we choose
generators of $B_3(u,v,w)$ and a basis in $H_1(X_{u,v,w},\Z)$.

To fix generators of the braid group, we choose the points $u, v,
w\in\C$ so that they are collinear and $v$ lies between $u$ and
$w$. Now let $A$ and~$B$ be the braids corresponding to the following
closed paths in $\C^{(3)}$: in the path defining $A$, the point $w$
stays where it is while $u$ and $v$ are swapped, $u$ and $v$ moving
along small arcs close to the segment $[p,q]$ so that the composition
of paths traveled by $u$ and $v$ defines a positively oriented simple
closed curve. The braid $B$ is defined similarly, with the point $u$
staying put and the points $v$ and $w$ being exchanged; see
Figure~\ref{fig:A,B}.
\begin{figure}
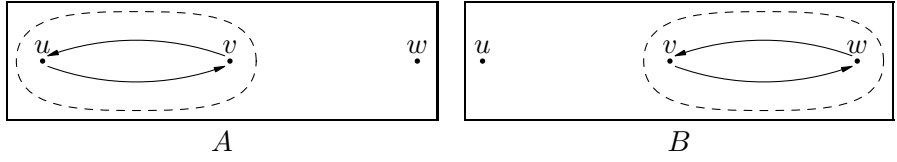

  \centering
\begin{tabular}{cc}
\fbox{\includegraphics{pics1.mps}}&\fbox{\includegraphics{pics2.mps}}\\[2pt]
$A$&$B$
\end{tabular}
  \caption{Two generators of $B_3(u,v,w)$. One may assume that
    homeomorphisms of \C representing these braids are identity
    outside the dashed ovals.}\label{fig:A,B}
  
\end{figure}
The group $B_3(u,v,w)$ is generated by $A$ and
$B$, and these braids satisfy the relation $ABA=BAB$.

For a basis in
$H_1(X_{u,v,w},\Z)$ we choose the $1$-cycles $\alpha$ and $\beta$
that are obtained by lifting the closed paths
$\alpha$ and $\beta$ on Fig.~\ref{fig:alpha,beta} from \C to
$X_{u,v,w}$.

\begin{figure}
\centering  
\includegraphics{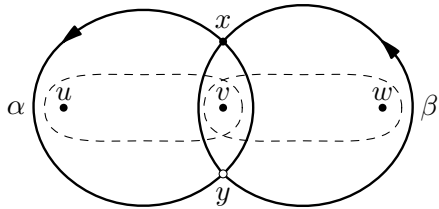}
  \caption{Projections to~\C of the cycles $\alpha$ and $\beta$. Point
    $x$ corresponds to their intersection point on $X_{u,v,w}$,
    point~$y$ is just an apparent intersection point.}\label{fig:alpha,beta}
\end{figure}

Abusing the language, we will denote the action on $H_1(X_{u,v,w},\Z)$
of a homeomorphism representing the braid~$A$ by the same letter~$A$,
and similarly for~$B$. 
Since the homeomorphisms
representing $A$ and $B$ can be chosen to be identity outside the
corresponding dashed ovals on Fig.~\ref{fig:A,B}, it is clear
that $A(\alpha)=\alpha$ and
$B(\beta)=\beta$. Taking into account that $A$ and $B$ preserve the
intersection pairing on $H_1(X_{u,v,w},\Z)$, one concludes that, in the basis
$(\alpha,\beta)$, the action of $A$ and $B$ on $H_1$ is given by
matrices of the form
\[
A=
\begin{pmatrix}
  1&x\\0&1
\end{pmatrix},\quad
B=
\begin{pmatrix}
  1&0\\
y&1
\end{pmatrix},\qquad x,y\in\Z.
\]
The relation $ABA=BAB$ implies that
\begin{equation}\label{eq:ABA=BAB}
x=2x+x^2y,\quad y=2y+xy^2.
\end{equation}
Equations~\eqref{eq:ABA=BAB} imply that either $x=y=0$ or $xy=-1$. The
first case is impossible: if both $A$ and $B$ act as identity, then
the entire braid group acts identically, which is absurd since its
action is non-trivial (see for example~\cite{Acampo}). So, one of the
integers $x$ and $y$ is equal to~$1$ and the other is equal
to~$-1$. Dualizing, we see that either $\mu(A)$ acts on $H^1(X_{u,v,w},\Z)$
as $\left(\begin{smallmatrix}1&0\\-1&1\end{smallmatrix}\right)$ (in
  the basis dual to $(\alpha,\beta)$) and $\mu(B)$ acts as
 $\left(\begin{smallmatrix}1&1\\0&1\end{smallmatrix}\right)$, or vice
    versa. 

Now it is well known that $\Gamma=(AB)^3$.
Plugging the possible
values of $A$ and~$B$, one obtains the result.
\end{proof}

\begin{note}
One can show that, with the choice of signs as on
figures~\ref{fig:A,B} and~\ref{fig:alpha,beta}, one has
\[
\mu(A)=
\begin{pmatrix}
  1&0\\
-1&1
\end{pmatrix},\quad
\mu(B)=
\begin{pmatrix}
  1&1\\0&1
\end{pmatrix}.
\]
We do not need to be that precise.
\end{note}

\section{Quadratic twists and monodromy}\label{sec:twists}

In this and the following section we will be studying monodromy groups
acting on $H^1(\cdot,\Z)$ of fibers in families of smooth curves of
genus~$1$. In
such families, the monodromy group acting on $H^1$ of the fiber is
contained in $\SL(2,\Z)$.

If $\X\to B$ is a smooth family of elliptic curves, then the morphism
$B\to\Aa^1$ assigning the $j$-invariant $j(p^{-1}(b))$ to a point
$b\in B$, will be denoted by $J_\X$. Following
Miranda~\cite[Lecture~V]{Miranda}, we will say that $J_\X$ is
\emph{the $J$-map} of the family~\X (in Kodaira's
paper~\cite{Kodaira}, the morphism $J_\X$ is called \emph{analytic
  invariant of the family~\X}).

\begin{Not}
If \X is a smooth family of elliptic curves over $B$ and if $b_0\in
B$, then the monodromy representation $\pi_1(B,b_0)\to
\SL(H^1(\X_{b_0},\Z))$ will be denoted by $\rho_\chi$.
\end{Not}

Suppose now that $p\colon X\to B$ is a family of elliptic curves over
a smooth and connected base. Since its fiber over the generic point of
$B$ is an elliptic curve over the field of rational functions $\C(B)$,
and since this elliptic curve can be reduced to the Weierstrass normal
from, there exists a Zariski open subset $U\subset B$ such that the
restriction $\X|_U$ is isomorphic to the family
\begin{equation}\label{eq:Weierstrass}
w^2=z^3+Pz+Q,
\end{equation}
where $P$ and $Q$ are regular functions on $U$, the fiber over $b\in
B$ being the smooth projective model of the curve defined by the equation
$w^2=z^3+P(b)z+Q(b)$, and discriminant of the right-hand side
of~\eqref {eq:Weierstrass} does not vanish on~$U$.
Proposition~\ref{prop:epi} shows that $\Mon(\X)=\Mon(\X|_U)$, so, as
far as monodromy groups are concerned, we may and will assume that
$U=B$ and that the family is defined by~\eqref{eq:Weierstrass} with
non-vanishing discriminant.

Any such family of the form~\eqref{eq:Weierstrass} defines a morphism
$\Br_\X\colon B\to \C^{(3)}$ assigning to each point $b\in B$ the
collection of roots of $z^3+P(b)z+Q(b)$. If $b_0\in B$ and if
$\{u,v,w\}$ is the set of roots of the polynomial $z^3+P(b)z+Q(b)$,
then the
morphism $\Br_\X$ induces a homomorphism $\br_\X\colon \pi_1(B,b_0)\to
B_3(u,v,w)$. If $\X_{b_0}$ is the fiber of \X over $b_0$, and if
\[
\mu\colon B_3(u,v,w)\to \SL(H^1(X_{b_0}),\Z))=\SL(2,\Z)
\]
is the homomorphism defined in Section~\ref{sec:braids}, then the diagram
\[
\xymatrix{
{\pi_1(B,b_0)}\ar[rr]^{\rho_\chi}\ar[dr]_{\br_\chi}&&{\SL(H^1(X_{b_0}),\Z))}\\
&{B_3}\ar[ur]_\mu
}
\]
is commutative.

Suppose that $p_1\colon\X_1\to B$ and $p_2\colon \X_2\to B$ are two
families of elliptic curves over a base~$B$. One says that $\X_1$
and $\X_2$ \emph{differ by a quadratic twist} if their
scheme-theoretic generic fibers (which are elliptic curves over the
field of rational functions $K=\C(B)$) are isomorphic over a quadratic
extension of~$K$. It it clear that this is the case if and only if
there exists a morphism of degree~$2$ (not necessarily finite or
\'etale) $f\colon U\to B$ such that $f^*\X_1$ and $f^*\X_2$ are
isomorphic smooth families.
If the families $\X_1$ and $\X_2$ differ by a quadratic twist, then
they can be represented by Weierstrass equations
\begin{equation}\label{eq:qu_tw}
\begin{aligned}
\X_1:\quad y^2&=x^3+Px+Q,\\
\X_2:\quad y^2&=x^3+D^2\cdot Px+D^3\cdot Q,  
\end{aligned}
\end{equation}
where $D$ is a rational function on $B$ (see \cite[Chapter~X,
  Proposition 5.4]{Silverman}).

Being interested only in the
monodromy groups $\Mon(\X_1)$ and $\Mon(\X_2)$, we can, replacing
$B$ by a Zariski open subset if necessary, assume that the families
$\X_1$ and $\X_2$ are smooth; in particular, this implies that $D$ is a
regular function on $B$ without zeroes. 

Suppose that $B$ is a smooth algebraic variety, $D$ is a regular
function on $B$ without zeroes, and $b_0\in B$ is a point. In the
definition that follows we regard $B$ as a complex manifold and $D$ as
a holomorphic function on~$B$.

\begin{definition}
In the above setting, by $\chi_D\colon\pi_1(B,b)\to\{\pm1\}$ we denote
the homomorphism defined as follows. If $B_0\in B$,
$\gamma\in\pi_1(B,b_0)$, we put $\chi_D(\gamma)=-1$ if the function
$\sqrt D$ changes after the analytic continuation along a loop
representing $\gamma$, and we put $\chi_D(\gamma)=1$ otherwise. In
other words, if a loop representing $\gamma$ is of the form $t\mapsto
\ph(t)$, $t\in[0;1]$, then $\chi_D(\gamma)=(-1)^k$, where $k$ is the
number of times the loop $D\circ\ph$ winds around the origin.

We will say that $\chi_D$ is the \emph{quadratic character associated
  to $D$}.
\end{definition}

\begin{proposition}\label{quad_tw.=>mono}
In the above setting, suppose that $\X_1$ and $\X_2$ are smooth
families of elliptic curves that differ by a quadratic twist as
in~\eqref{eq:qu_tw}. Then the monodromy homomorphism
$\rho_{\chi_2}\colon \pi_1(B)\to\SL(2,\Z)$ differs from
$\chi_D\rho_{\chi_1}$ by an inner automorphism of $\SL(2,\Z)$.
\end{proposition}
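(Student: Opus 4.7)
The strategy is to factor the braid monodromy of~$\X_2$ through the natural action of $\C^*$ on $\C^{(3)}$ by simultaneous scalar multiplication, and then read off the contribution of~$\C^*$ from Proposition~\ref{(AB)^3}.

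As a preliminary normalization, I would use that multiplying~$D$ by the square of any nonzero constant yields an isomorphic twist (via the change of variables $(x,y)\mapsto(c^2 x,c^3 y)$); choosing $c$ with $c^2=1/D(b_0)$ (possible over~\C), we may therefore assume $D(b_0)=1$. The sign ambiguity in~$c$ changes the identification of the fiber by the hyperelliptic involution, which acts as~$-I$ on~$H^1$; since $-I$ is central in~$\SL(2,\Z)$, this is inessential. With $D(b_0)=1$ the Weierstrass fibers of~$\X_1$ and~$\X_2$ at~$b_0$ coincide on the nose, and $\Br_{\X_1}(b_0)=\Br_{\X_2}(b_0)=(u_0,v_0,w_0)$.

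Next, since the roots of $z^3+D^2Pz+D^3Q$ are precisely $D$~times the roots of $z^3+Pz+Q$, the morphism $\Br_{\X_2}$ factors as
\[
B\xrightarrow{(D,\Br_{\X_1})}\C^*\times\C^{(3)}\xrightarrow{m}\C^{(3)},
\]
where $m(\lambda,(u,v,w))=(\lambda u,\lambda v,\lambda w)$. Passing to fundamental groups, $\pi_1(\C^*\times\C^{(3)},(1,(u_0,v_0,w_0)))\cong\Z\times B_3(u_0,v_0,w_0)$. The map~$m_*$ is the identity on the $B_3$~factor (since $m(1,\cdot)=\mathrm{id}$) and sends the generator of~$\Z$ to the braid~$\Gamma$ of Proposition~\ref{(AB)^3} (since $m(\cdot,(u_0,v_0,w_0))$ is precisely the loop $t\mapsto(u_0e^{2\pi it},v_0e^{2\pi it},w_0e^{2\pi it})$). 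Because $\Gamma$ is central in~$B_3$, this yields
\[
\br_{\X_2}(\gamma)=\Gamma^{n_D(\gamma)}\cdot\br_{\X_1}(\gamma)
\]
for every $\gamma\in\pi_1(B,b_0)$, where $n_D(\gamma)$ is the winding number of $D\circ\gamma$ about the origin.

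Composing with~$\mu$ and invoking $\mu(\Gamma)=-I$ from Proposition~\ref{(AB)^3}, we obtain
\[
\rho_{\chi_2}(\gamma)=(-I)^{n_D(\gamma)}\rho_{\chi_1}(\gamma)=\chi_D(\gamma)\rho_{\chi_1}(\gamma),
\]
as required. The main bookkeeping point, and the reason why the statement allows an inner automorphism rather than equality, is the identification between $H^1$ of the fiber of~$\X_1$ at~$b_0$ and $H^1$ of the fiber of~$\X_2$ at~$b_0$: this depends on a choice of $\sqrt{D(b_0)}$ and on the ensuing choice of basis on the common fiber. Once that identification is fixed, the factorization through~$m$ and Proposition~\ref{(AB)^3} deliver the result with no further work.
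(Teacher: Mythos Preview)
Your proof is correct and follows essentially the same route as the paper's: both exploit that the roots of the twisted Weierstrass cubic are $D$~times the original roots, factor the braid monodromy of~$\X_2$ through the scalar action of~$\C^*$ on~$\C^{(3)}$, and then invoke Proposition~\ref{(AB)^3} to identify the contribution of the rotation loop as~$-I$. The only cosmetic difference is that you normalize $D(b_0)=1$ so that the two base triples coincide and the computation on $\pi_1(\C^*\times\C^{(3)})\cong\Z\times B_3$ goes through without further conjugation, whereas the paper keeps $D(b_0)$ arbitrary and instead introduces an explicit path~$\tau$ in~$\C^{(3)}$ joining $(D(b_0)u_0,D(b_0)v_0,D(b_0)w_0)$ to $(u_0,v_0,w_0)$, obtaining $\br_{\X_2}(\gamma)=\tau\,\br_{\X_1}(\gamma)\,\delta\,\tau^{-1}$ with $\delta$ the rotation loop; the conjugation by~$\tau$ is exactly the inner automorphism your normalization absorbs.
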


\begin{proof}
Suppose that $\X_1$ and $\X_2$ are defined by the
equations~\eqref{eq:qu_tw}, where $D$ has no zeroes or poles on $B$
and discriminants of the left-hand sides of never vanish. If $u(b)$,
$v(b)$, and $w(b)$ are the roots of the polynomial $x^3+P(b)x+Q(b)$,
where $b\in B$, then roots of the polynomial
$x^3+D(b)^2P(b)x+D(b)^3Q(b)$ are $D(b)u(b)$, $D(b)v(b)$, and
$D(b)w(b)$.

In the argument that follows we will not distinguish between path and
loops in $\C^{(3)}$ and their homotopy classes; this will not lead to
a confusion. That said, choose a base point $b_0\in B$ and fix a path
$\tau$ in $\C^{(3)}$ joining the points (unordered triples)
$(D(b_0)u(b_0), D(b_0)v(b_0),D(b_0)w(b_0))$ and $(u(b_0),
v(b_0),w(b_0))$. If $\gamma\in\pi_1(B,b_0)$, then
\[
\br_{\X_2}(\gamma)=\tau\br_{\X_1}(\gamma)\delta\tau^{-1},
\]
where $\delta\in
B_3 (u(b_0), v(b_0),w(b_0))$ is the loop defined by the formula
\[
t\mapsto (D(\gamma(t))/|D(\gamma(t)|)\cdot \gamma(t)
\]
in which we use the following notation: if $\lambda\in\C^*$ and
$\gamma=(u,v,w)\in\C^{(3)}$, then $\lambda\cdot\gamma$ is the
unordered triple $(\lambda u, \lambda v, \lambda w)$.

If the loop $\delta$ winds $k$ times around the origin, then
Proposition~\ref{(AB)^3} implies that
$\mu(\delta)=(-1)^kI=\chi_D(\gamma)I$, where $I$ is the identity
matrix, whence the result.
\end{proof}

\begin{lemma}\label{abstr.tw.}
Suppose that $\Pi$ is a group and that $\rho\colon\Pi\to\SL(2,\Z)$ and
$\chi\colon \Pi\to\{\pm1\}$ are homomorphisms. Put $G_1=\im\rho$,
$G_2=\im(\chi\rho)$. Then one of the following cases holds:

\textup{(i)} $G_1=G_2$;

\textup{(ii)} 
there exists a subgroup $H\subset G_1$, $(G_1:H)=2$, such that
$G_2=H\cup (-I)(G_1\setminus H)$;

\textup{(iii)}
$G_2$ is the subgroup of $\SL(2,\Z)$ generated by $G_1$ and~$-I$.
\end{lemma}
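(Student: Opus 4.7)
The plan is to split on the kernel of $\chi$. Since $\chi$ takes values in the central subgroup $\{\pm I\}\subset\SL(2,\Z)$, the map $\gamma\mapsto\chi(\gamma)\rho(\gamma)$ is indeed a homomorphism, so $G_2$ is a subgroup. Let $K=\ker\chi\subset\Pi$; either $K=\Pi$ (giving case (i) immediately, since then $\chi\rho=\rho$) or $[\Pi:K]=2$. Assume the latter and set $H=\rho(K)$. Then for $\gamma\in K$ we have $\chi(\gamma)\rho(\gamma)=\rho(\gamma)\in H$, while for $\gamma\in\Pi\setminus K$ we have $\chi(\gamma)\rho(\gamma)=-\rho(\gamma)$. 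Fixing any $\gamma_0\in\Pi\setminus K$, the fact that $K$ has index $2$ gives $\rho(\Pi\setminus K)=\rho(\gamma_0)H$, a single coset of $H$ in $G_1$. Hence $G_1=H\cup\rho(\gamma_0)H$ and $G_2=H\cup(-I)\rho(\gamma_0)H$.

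Now the dichotomy is whether the two cosets $H$ and $\rho(\gamma_0)H$ in $G_1$ coincide or not. If they are distinct, then $[G_1:H]=2$ and $\rho(\gamma_0)H=G_1\setminus H$, so
\[
G_2=H\cup(-I)(G_1\setminus H),
\]
which is precisely case (ii). If instead $H=\rho(\gamma_0)H$, then $G_1=H$, so there exists $\delta\in\Pi\setminus K$ with $\rho(\delta)\in H=\rho(K)$; writing $\rho(\delta)=\rho(\kappa)$ for some $\kappa\in K$, the element $\delta\kappa^{-1}$ lies in $\Pi\setminus K$ and maps to $I$ under $\rho$, hence to $-I$ under $\chi\rho$. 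Thus $-I\in G_2$. Combining this with the equalities above yields $G_2=H\cup(-I)H=G_1\cup(-I)G_1=\langle G_1,-I\rangle$, which is case (iii).

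The content of the lemma is essentially bookkeeping about cosets, and no step should pose a real obstacle; the only mildly subtle point is recognising that the two alternatives for $G_1/H$ (order $1$ versus order $2$) correspond exactly to whether $-I$ must already lie in $G_2$ (case (iii)) or whether $G_2$ merely swaps one coset of an index-$2$ subgroup with its $(-I)$-translate (case (ii)). Laying out the three mutually exclusive outcomes from the two binary choices (is $\chi$ trivial? is $H$ all of $G_1$?) gives the proof cleanly.
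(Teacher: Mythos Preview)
Your proof is correct and follows essentially the same approach as the paper's. The only cosmetic difference is that the paper phrases the secondary dichotomy as ``$\Ker\rho\subset\Ker\chi$'' versus ``$\Ker\rho\not\subset\Ker\chi$'' (equivalently, whether $\chi$ factors through $G_1$), whereas you phrase it as whether $H=\rho(\Ker\chi)$ has index~$2$ or index~$1$ in $G_1$; these conditions are easily seen to be equivalent, and the resulting case analysis is identical.
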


\begin{proof}
If the quadratic character $\chi$ is trivial, then $G_2=G_1$ and
case~(i) holds. Suppose now that $\chi$ is non-trivial; then
$\Ker\chi$ is a subgroup of index $2$ in $\Pi$.

If $\Ker\rho\subset\Ker\chi$, then the character $\chi$
factors through the group $G_1$:
\[
\xymatrix{
\Pi\ar@{->>}[r]^{\rho}\ar[dr]_{\chi}&G_1\ar[d]^{\bar\chi}\\
  &{\{\pm1\};}
}
\]
now if $H=\Ker\bar\chi$, then $G_2=H\cup (-I)(G_1\setminus H)$. 

Suppose finally that $\Ker\rho\not\subset\Ker\chi$. Then
$G_1=\rho(\Ker\chi)=\rho(\Pi\setminus\Ker\chi)$ and
\[
G_2=\rho(\Ker\chi)\cup\chi\rho(\Pi\setminus\Ker\chi)
=\rho(\Ker\chi)\cup(-I)(\rho(\Pi\setminus\Ker\chi))=G_1\cup(-I)G_1,
\]
so $G_2$ is the subgroup generated by $G_1$ and $-I$ and case~(iii) holds.
\end{proof}

Suppose now that $\X_1$ and $\X_2$ are smooth families of elliptic
curves over the same base $B$. If we fix a base point $b\in B$,  we can identify (not canonically) first integer cohomology groups of the
fibers $(\X_1)_b$ and $(\X_2)_b$ and identify them both with $\Z^2$.

\begin{proposition}\label{modify_mono}
Suppose that $\X_1$ and $\X_2$ are smooth families of elliptic curves
over the same base $B$ and that $\X_1$ and $\X_2$ differ by a
quadratic twist with a non-vanishing regular function~$D$ as
in~\eqref{eq:qu_tw}.  Put $G_i=\Mon(\X_i, b)\subset\SL(2,\Z)$
\textup(these subgroups are only defined up to a
conjugation\textup). Then:

\textup{(i)}
either $G_1$ and $G_2$ are conjugate, or one of these groups contains a
subgroup of index $2$ that is conjugate to the other subgroup, or
each $G_i$ contains a subgroup $H_i$ of index~$2$ and the subgroups
$H_1$ and $H_2$ are conjugate.

\textup{(ii)} 
if $G_1=\SL(2,\Z)$, then $G_2=\SL(2,\Z)$.
\end{proposition}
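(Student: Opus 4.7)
The plan is to reduce everything to Lemma~\ref{abstr.tw.} via Proposition~\ref{quad_tw.=>mono}. Applying the latter with $\Pi=\pi_1(B,b)$, we may assume (up to conjugation in $\SL(2,\Z)$) that $\rho_{\chi_2}=\chi_D\rho_{\chi_1}$, so the lemma applies directly with $\rho=\rho_{\chi_1}$ and $\chi=\chi_D$, giving $G_1=\im\rho$ and $G_2=\im(\chi\rho)$.

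For part~(i), I would run through the three cases of Lemma~\ref{abstr.tw.}. Case~(i) of the lemma yields $G_1=G_2$, matching the first alternative. Case~(iii) yields $G_2=\langle G_1,-I\rangle$, which gives $G_1=G_2$ if $-I\in G_1$ and otherwise exhibits $G_1$ as an index-$2$ subgroup of $G_2$, matching the second alternative. Case~(ii) produces $G_2=H\cup(-I)(G_1\setminus H)$ with $(G_1:H)=2$ and calls for a short subcase analysis: if $-I\in H$, then $(-I)(G_1\setminus H)=G_1\setminus H$ and $G_2=G_1$; if $-I\in G_1\setminus H$, then $(-I)(G_1\setminus H)=H$ and $G_2=H$ has index~$2$ in~$G_1$; and if $-I\notin G_1$, then $G_1\cap G_2=H$ is an index-$2$ subgroup of each of $G_1$ and $G_2$, matching the third alternative.

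For part~(ii), assume $G_1=\SL(2,\Z)$, so in particular $-I\in G_1$. The subcase analysis above immediately yields $G_2=\SL(2,\Z)$ in cases~(i) and~(iii) of the lemma and in the subcase of case~(ii) with $-I\in H$. The only remaining possibility is case~(ii) of the lemma with $-I\in G_1\setminus H$, which requires a homomorphism $\bar\chi\colon\SL(2,\Z)\to\{\pm1\}$ with $\bar\chi(-I)=-1$. I would rule this out by the elementary observation that, setting $S=\left(\begin{smallmatrix}0&-1\\1&0\end{smallmatrix}\right)\in\SL(2,\Z)$, one has $S^2=-I$, whence $\bar\chi(-I)=\bar\chi(S)^2=1$ for every such homomorphism.

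The main (quite mild) obstacle is tracking how the three subcases of case~(ii) of the lemma map onto the three alternatives listed in part~(i); the only substantive input for part~(ii) is the algebraic fact that $-I$ lies in the kernel of every $\{\pm1\}$-valued character of $\SL(2,\Z)$.
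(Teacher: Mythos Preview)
Your proof is correct and follows the same route as the paper: reduce to Lemma~\ref{abstr.tw.} via Proposition~\ref{quad_tw.=>mono}, then run the three cases with the subcase split on where $-I$ sits. The only difference is in part~(ii): the paper shows $-I$ lies in the unique index-$2$ subgroup of $\SL(2,\Z)$ by computing the abelianization $\SL(2,\Z)^{\mathrm{ab}}\cong\Z/12\Z$ and checking the image of $-I$, whereas you observe directly that $-I=S^2$ is a square and hence dies under any $\{\pm1\}$-valued character. Your argument is more elementary and to the point; the paper's gives the extra (here unneeded) information that the index-$2$ subgroup is unique.
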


\begin{proof}
 Put $\Pi=\pi_1(B)$.  Proposition~\ref{quad_tw.=>mono} implies that,
 conjugating the subgroups if necessary, one may assume that there
 exist homomorphisms $\rho\colon \Pi\to\SL(2,\Z)$ and $\chi\colon
 \Pi\to\{\pm1\}$ such that $G_1=\im(\rho)$, $G_2=\im(\chi\rho)$.

We prove part~\textup{(i)} first; to that end, we invoke
Lemma~\ref{abstr.tw.}. If case~\textup{(i)} or case~\textup{(iii)} of
this lemma holds, we are done. In case~(ii) there exists a subgroup
$H\subset G_1$, $(G_1:H)=2$, such that $G_2=H\cup (-I)(G_1\setminus
H)$.

If $-I\in H$, this implies that $G_2=G_1$; if $-I\in G_1\setminus H$,
this implies that $G_2=H$, so $G_2$ is a subgroup of index $2$ in
$G_1$; in the remaining case $-I\notin G_1$, one has $G_2\ne G_1$,
$G_2\supset H$ and $(G_2:H)=(G_1:H)=2$. Thus, part~\textup{(i)} is proved. 

To prove part~\textup{(ii)}, suppose that $G_1=\SL(2,\Z)$. If
case~\textup{(i)} or case~\textup{(iii)} of Lemma~\ref{abstr.tw.}
holds, it is clear that $G_2=\SL(2,\Z)$. If case~\textup{(ii)}, observe
that $\SL(2,\Z)$ contains a unique subgroup $H$ of index~$2$:
this follows from the fact that the abelianization of $\SL(2,\Z)$ is
$\Z/12\Z$. Expressing the corresponding epimorphism $\ph\colon
\SL(2,\Z) \to\Z/12\Z$ in terms of its action on the generators
$\left(\begin{smallmatrix} 0&1\\-1&0
\end{smallmatrix}
\right)$ and $\left(
\begin{smallmatrix}
  0&-1\\1&1
\end{smallmatrix}\right)$, 
one sees that $\ph(-I)\in 6\Z/12\Z$, whence $-I\in H$, whence
$G_2=G_1=\SL(2,\Z)$.
\end{proof}

\begin{cor}[from the proof]\label{twice_greater}
Suppose that $\X_1$ and $\X_2$ are families of elliptic curves over
the same smooth base that differ by a quadratic twist. Then

\textup{(i)} if the monodromy group $\Mon(\X_1)$
has finite index in $\SL(2,\Z)$, then either the indices
$(\SL(2,\Z):\Mon(\X_1))$ and $(\SL(2,\Z):\Mon(\X_2))$ are equal or
one of them is twice greater than the other;

\textup{(ii)} images of $\Mon(\X_1)$ and $\Mon(\X_2)$ in $\PSL(2,\Z)$ are
conjugate.\qed
\end{cor}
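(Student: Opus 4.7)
The plan is to extract both statements essentially for free from Proposition~\ref{modify_mono} and its proof, since the heavy lifting has already been done.

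For part~(i), I would go through the three cases listed in part~(i) of Proposition~\ref{modify_mono}. In the first case ($G_1$ and $G_2$ conjugate), the indices $(\SL(2,\Z):G_1)$ and $(\SL(2,\Z):G_2)$ are visibly equal. In the second case, one of the groups, say $G_1$, contains a subgroup of index~$2$ conjugate to~$G_2$; by multiplicativity of indices, $(\SL(2,\Z):G_2) = 2\,(\SL(2,\Z):G_1)$, so the finiteness of one index forces the other and the two differ by a factor of~$2$. In the third case, each $G_i$ contains a subgroup $H_i$ of index~$2$ with $H_1$ and $H_2$ conjugate; then $(\SL(2,\Z):H_1) = (\SL(2,\Z):H_2)$, i.e.\ $2\,(\SL(2,\Z):G_1) = 2\,(\SL(2,\Z):G_2)$, so the indices coincide. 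In every case the assertion of~(i) holds.

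For part~(ii), I would return to Proposition~\ref{quad_tw.=>mono}, which says that after an inner automorphism of $\SL(2,\Z)$ one has $\rho_{\X_2} = \chi_D\cdot\rho_{\X_1}$, where $\chi_D\colon\pi_1(B)\to\{\pm I\}$. Since $\{\pm I\}$ is precisely the kernel of the projection $\SL(2,\Z)\to\PSL(2,\Z)$, composing both $\rho_{\X_1}$ and $\rho_{\X_2}$ with this projection yields the same homomorphism up to the ambient inner automorphism. Hence the images of $\Mon(\X_1)$ and $\Mon(\X_2)$ in $\PSL(2,\Z)$ are conjugate.

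No step here is truly an obstacle; the work has been absorbed into Lemma~\ref{abstr.tw.} and Proposition~\ref{modify_mono}. The only mild subtlety is keeping track of the three sub-cases in~(i) and verifying that the index relation (equal, or one twice the other) holds in each, in particular noting in case~(iii) that the indices of $G_1$ and $G_2$ are forced to be \emph{equal} (not off by a factor of~$2$) because they share, up to conjugation, a common index-$2$ subgroup.
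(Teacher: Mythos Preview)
Your proposal is correct and matches the paper's intended argument: the corollary is marked ``[from the proof]'' and given no separate proof precisely because parts~(i) and~(ii) are read off from the case analysis in Proposition~\ref{modify_mono} (via Lemma~\ref{abstr.tw.}) and from Proposition~\ref{quad_tw.=>mono}, respectively, exactly as you do. Your remark that case~(iii) forces the indices to be \emph{equal} is a useful clarification that the paper leaves implicit.
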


\begin{proposition}\label{twist=>mono}
Suppose that \X and \Y are smooth families of elliptic curves over the
same base $B$ and that their $J$-maps $J_\X,J_\Y\colon B\to \Aa^1$ are
equal and non-constant. Then

\textup{(i)} either $(\SL(2,\Z):\Mon(\X))=(\SL(2,\Z):\Mon(\Y))$ or one
of these indices is twice greater than the other;

\textup{(ii)} images of $\Mon(\X)$ and $\Mon(\Y)$ in $\PSL(2,\Z)$ are
conjugate;

\textup{(iii)} if $\Mon(\X)=\SL(2,\Z)$ then $\Mon(\Y)=\SL(2,\Z)$.
\end{proposition}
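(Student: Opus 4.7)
The plan is to reduce Proposition~\ref{twist=>mono} to the quadratic-twist results already established in Proposition~\ref{modify_mono} and Corollary~\ref{twice_greater}. The key observation is that, after shrinking $B$ to a suitable Zariski open subset, the hypothesis that $J_\X=J_\Y$ is non-constant forces \X and \Y to differ by a quadratic twist.

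First I would remove from $B$ the preimage under $J_\X=J_\Y$ of the finite set $\{0,1728\}\subset\Aa^1$. Since $J_\X$ is non-constant, what remains is a non-empty Zariski open subset $U\subset B$, and by Proposition~\ref{prop:epi} we have $\Mon(\X)=\Mon(\X|_U)$ and $\Mon(\Y)=\Mon(\Y|_U)$. Hence, without loss of generality, the $j$-invariant of the generic fiber lies outside $\{0,1728\}$.

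Next, I would pass to the scheme-theoretic generic fibers $\X_\eta$ and $\Y_\eta$, which are elliptic curves over the field $K=\C(B)$ having equal $j$-invariant different from $0$ and $1728$. By the classical classification of twists over a field of characteristic zero (see \cite[Chapter~X, Proposition 5.4]{Silverman}), two such elliptic curves become isomorphic over a quadratic extension of~$K$; this is precisely the definition of ``differing by a quadratic twist'' recalled in Section~\ref{sec:twists}. Shrinking $U$ once more so that both $\X|_U$ and $\Y|_U$ are smooth and the twisting function $D$ has no zeros or poles on~$U$, we place \X and \Y into the form~\eqref{eq:qu_tw}.

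Once this reduction is complete, parts~(i), (ii) and~(iii) of the proposition fall out immediately: (i) and~(ii) are the two items of Corollary~\ref{twice_greater}, while~(iii) is Proposition~\ref{modify_mono}(ii). The only non-formal step in the plan is the invocation of the classification of twists, where the exclusion of the special $j$-values $0$ and~$1728$ is essential (otherwise cubic or sextic twists would intervene); however, non-constancy of $J_\X$ lets us discard those values as a Zariski closed subset of the base via Proposition~\ref{prop:epi}, so no real obstacle remains.
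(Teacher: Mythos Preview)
Your proposal is correct and follows the same line as the paper's proof: reduce to Corollary~\ref{twice_greater} and Proposition~\ref{modify_mono} by showing that \X and \Y differ by a quadratic twist, which follows from \cite[Chapter~X, Proposition~5.4]{Silverman} applied to the generic fibers over $K=\C(B)$. The only cosmetic difference is that the paper skips your preliminary shrinking of~$B$: since $J_\X$ is non-constant, its value in $K=\C(B)$ is already a non-constant rational function, hence automatically distinct from the constants~$0$ and~$1728$, so Silverman's result applies directly without removing any fibers.
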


\begin{Not}
In the next section we will see that if the $J$-map is not constant
then the monodromy group has finite index in $\SL(2,\Z)$. In the
statement of this proposition we allow indices of subgroups to be
infinite and assume that $2\cdot\infty=\infty$.
\end{Not}

\begin{proof}
In view of Propositions~\ref{twice_greater} and~\ref{modify_mono} it
suffices to show that \X and \Y differ by a quadratic twist. To that
end put $K=\C(B)$ (the field of rational functions).  The
(scheme-theoretic) generic fibers of the families \X and \Y over
$\Spec K$ are elliptic curves over $K$. They have the same
$j$-invariant $J_\X=J_\Y\in \C(B)$, and this $j$-invariant is not
equal to $0$ or~$1728$ since $J_\X=J_\Y$ is not constant. Hence, these
elliptic curves differ by a quadratic twist by virtue of Proposition
5.4 from \cite[Chapter~X]{Silverman}, and so are the corresponding
families.
\end{proof}

\section{Main result and applications}\label{sec:appl}

We begin with a folklore result for which I do not know an adequate reference.

\begin{proposition}\label{folklore}
Suppose that $\pi\colon\X\to B$ is a smooth family of curves of
genus~$1$, where $B$ is a variety \textup(i.e., a reduced scheme of
finite type over~\C{}\textup). Then the mapping from $B$ to \C that
assigns $j$-invariant $j(f^{-1})(b)$ to a point $b\in B$, is induced
by a morphism from $B$ to $\Aa^1$.
\end{proposition}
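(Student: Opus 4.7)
The $j$-map $J\colon B\to\C$ is defined set-theoretically, and the task is to show that it is induced by a morphism to $\Aa^1$. My plan is to work Zariski-locally on $B$: for each $b_0\in B$, I will produce a Zariski open neighborhood $V\subset B$ of $b_0$ together with a morphism $V\to\Aa^1$ inducing $J|_V$. Since $J$ is already well-defined pointwise on $B$, these local morphisms automatically agree on overlaps and hence glue to a global morphism.

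To construct such a $V$, I would first exploit smoothness of $\pi$: by the standard fact that smooth morphisms admit sections étale-locally on the base (take a local étale slice through any $\C$-point of the fiber over $b_0$), there is an étale neighborhood $g\colon U\to V$ of $b_0$ on which the pullback $g^*\X\to U$ admits a section $\sigma$. Shrinking $U$ further so that the rank-$3$ bundle $\pi_*\Ooo_{g^*\X}(3\sigma)$ becomes trivial on $U$, the pair $(g^*\X,\sigma)$ is cut out inside $\PP^2\times U$ by a global Weierstrass equation $y^2=x^3+Px+Q$ with $P,Q\in\Ooo(U)$ and discriminant $4P^3+27Q^2$ nowhere vanishing. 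Formula~\eqref{j:def} then exhibits the $j$-invariant on $U$ as the regular function $1728\cdot 4P^3/(4P^3+27Q^2)$, yielding a morphism $J_U\colon U\to\Aa^1$.

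It remains to descend $J_U$ from $U$ to $V$. Since the $j$-invariant of a smooth curve of genus~$1$ depends only on its isomorphism class and not on an auxiliary choice of section, the two compositions $U\times_V U\rightrightarrows U\xrightarrow{\,J_U\,}\Aa^1$ take the same value at each point of $U\times_V U$; as that fiber product is reduced (being étale over the reduced scheme $U$) and $\Aa^1$ is separated, the two morphisms coincide. Étale descent for morphisms into the affine line then produces the required morphism $J_V\colon V\to\Aa^1$. The content of the proposition is essentially the universal property of $\Aa^1_j$ as a coarse moduli space for smooth curves of genus~$1$, so no deep obstacle arises; the only real care needed is the bookkeeping around the étale base change, the Weierstrass reduction, and the descent step.
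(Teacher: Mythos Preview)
Your argument is correct and takes a genuinely different route from the paper's. The paper avoids \'etale-local sections and descent entirely: it replaces $\X$ by the relative Picard variety $\Pic^0(\X/B)$, which is again a smooth family of curves of genus~$1$ over~$B$ with the same fibrewise $j$-invariant (since $\Pic^0(C)\cong C$ for any smooth genus-$1$ curve over~\C), but now carries a \emph{global} section, namely the zero section. Once a global section is in hand, the paper simply cites the standard reference (Deligne's \emph{formulaire}) for the regularity of the $J$-map of a family of elliptic curves. Your approach trades the invocation of the relative Picard scheme for a more elementary but lengthier package of \'etale-local Weierstrass reduction plus descent; it has the advantage of not relying on representability of $\Pic^0(\X/B)$, while the paper's approach is shorter and more conceptual. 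Both are perfectly valid, and your descent step is clean precisely because $B$ (hence $U\times_V U$) is reduced and $\Aa^1$ is separated, which is exactly the point you note.
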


\begin{proof}
If $\X'=\Pic^0(\X/B)$ (relative Picard variety, see~\cite[Section
  5]{Kleiman}), then the family $p'\colon \X'\to\C$ has a section (to
wit, the zero section) and induces the same mapping from $B$ to \C
since $\Pic^0(C)\cong C$ if $C$ is a smooth curve of genus~$1$
over~\C. Thus, without loss of generality one may assume that the
family in question has a section; in this case
see~\cite[\S\,5]{formulaire}.
\end{proof}

\begin{proposition}\label{numcompts}
Suppose that $\pi\colon\X\to B$ is a smooth family of curves of
genus~$1$ over a smooth and connected base~$B$ \textup(the ground field
is~\C{}\textup); let $J_\X\colon B\to\Aa^1$ be the $J$-map, attaching to any
point $a\in B$ the $j$-invariant of the fiber of \X over~$a$.  

\textup{(i)} If the morphism~$J_\X$ is not constant and its general
fiber is connected, then $\Mon(X)=\SL(2,\Z)$.

\textup{(ii)} If the morphism~$J_\X$ is not constant and its general
fiber has $m\ge$ connected components, then $\Mon(\X)$ is a subgroup of
index at most~$2m$ in $\SL(2,\Z)$ and the image of $\Mon(\X)$ in
$\PSL(2,\Z)$ is a subgroup of index at most $m$ in~$\PSL(2,\Z)$.
\end{proposition}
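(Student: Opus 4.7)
The plan is to compare $\X$ with the tautological family $\U\colon y^2=(x-u)(x-v)(x-w)$ over $\C^{(3)}$ of Section~\ref{sec:braids} by means of a fibre product over the $j$-line. First I would perform two preliminary reductions: replacing $\X$ by $\Pic^0(\X/B)$ equips the family with a section without altering the $H^1$-monodromy or, by Proposition~\ref{folklore}, the $J$-map; and shrinking $B$ by a Zariski-open subset (harmless in view of Proposition~\ref{prop:epi}) allows me to assume a global Weierstrass model $y^2=x^3+Px+Q$ with non-vanishing discriminant. The explicit formulas for $\mu(A)$ and $\mu(B)$ extracted in the proof of Proposition~\ref{(AB)^3} show that these two matrices generate $\SL(2,\Z)$, so $\mu\colon B_3\to\SL(2,\Z)$ is surjective and $\Mon(\U)=\SL(2,\Z)$.

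Second, I would exploit the $J$-map $\tilde{J}\colon\C^{(3)}\to\Aa^1$ of $\U$. Its general fibre is irreducible: the fibre over $j_0\notin\{0,1728\}$ is the orbit of any cubic with $j$-invariant $j_0$ under the substitutions $x\mapsto\lambda^2 x+a$ (which multiply $P,Q$ by appropriate powers of $\lambda$ and leave the $j$-invariant alone). I would choose Zariski-open $U\subset B_{\mathrm{sm}}$ and $V\subset\C^{(3)}$ so that $J_\X|_U$ is smooth with general fibre of $m$ connected components, $\tilde{J}|_V$ is smooth with connected fibres, and $J_\X(U)=\tilde{J}(V)$ is a common dense open subset of $\Aa^1$. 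Then the fibre product $W=V\times_{\Aa^1}U$ is smooth and connected, with projections $p_1\colon W\to V$ and $p_2\colon W\to U$: the first dominant with general fibre of $m$ components, the second dominant with connected general fibre. The pulled-back families $p_1^*\U$ and $p_2^*\X$ on $W$ share the non-constant $J$-map $\tilde{J}\circ p_1=J_\X\circ p_2$.

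Applying the general results of Sections~\ref{sec1} and~\ref{sec:twists} then finishes both assertions. Proposition~\ref{prop:pullback} gives $(\SL(2,\Z):\Mon(p_1^*\U))\le m$, while Corollary~\ref{conn.fiber:gen} identifies $\Mon(p_2^*\X)$ with $\Mon(\X)$ up to conjugation. Proposition~\ref{twist=>mono}, applied to $p_1^*\U$ and $p_2^*\X$, then asserts that their images in $\PSL(2,\Z)$ are conjugate and their indices in $\SL(2,\Z)$ differ by at most a factor of~$2$; combining the three inputs proves~(ii). For~(i) one has $m=1$, so $(\SL(2,\Z):\Mon(\X))\le2$ and the image of $\Mon(\X)$ in $\PSL(2,\Z)$ is all of $\PSL(2,\Z)$; since the unique index-$2$ subgroup of $\SL(2,\Z)$ contains $-I$ (as noted in the proof of Proposition~\ref{modify_mono}), its image in $\PSL(2,\Z)$ has index~$2$, a contradiction, so $\Mon(\X)=\SL(2,\Z)$.

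I expect the main geometric ingredient to be the connectedness of the generic fibre of $\tilde{J}\colon\C^{(3)}\to\Aa^1$; once it is in hand, the fibre-product trick transfers the computation from the universal family $\U$ (whose monodromy is already $\SL(2,\Z)$) to the given family~$\X$ via Corollary~\ref{conn.fiber:gen} and Proposition~\ref{twist=>mono}, and the rest is a bookkeeping of indices.
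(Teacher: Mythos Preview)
Your proposal is correct and follows essentially the same strategy as the paper: reduce to a family with section, compare with a reference family whose monodromy is all of $\SL(2,\Z)$ by forming a fibre product over the $j$-line, and then invoke Proposition~\ref{prop:pullback}, Corollary~\ref{conn.fiber:gen}, and Proposition~\ref{twist=>mono}. The only cosmetic differences are that the paper takes the Weierstrass family $\Bc$ over $V=\{(p,q):4p^3+27q^2\ne0\}$ rather than $\U$ over $\C^{(3)}$, verifies connectedness of the $J$-fibres via the explicit isomorphism $V'\cong\Aa^1_0\times\Aa^1_{0,1728}$ (which also makes smoothness and irreducibility of the fibre product immediate), and for part~(i) appeals directly to Proposition~\ref{twist=>mono}(iii) rather than your detour through the unique index-$2$ subgroup.
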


We begin with a lemma.

\begin{lemma}\label{Pic}
Suppose that $\pi\colon \X\to B$ is a smooth family of curves of
genus~$1$. Then there exists a smooth family of elliptic curves
$p'\colon \X'\to B$ such that the $J$-maps $J_{\X},J_{\X'}\colon
B\to\Aa^1$ are the same and $\Mon(\X')\subset\SL(2,\Z)$ is conjugate
to $\tau(\Mon(\X))$, where $\tau\colon\SL(2,\Z)\to\SL(2,\Z)$ is the
automorphism defined by the formula $M\mapsto(M^t)^{-1}$.
\end{lemma}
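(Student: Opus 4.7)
The plan is to take $\X' = \Pic^0(\X/B)$, the relative Jacobian. For a smooth proper family of geometrically connected curves of genus~$1$, the relative Picard scheme $\Pic^0(\X/B)$ is representable, smooth of relative dimension~$1$, and carries the canonical zero section corresponding to the trivial line bundle on each fiber; thus $(\X', 0)$ qualifies as a smooth family of elliptic curves in the sense of the paper. On each fiber $\Pic^0(\X_b)$ is isomorphic to $\X_b$ as a complex torus, non-canonically, via any Abel--Jacobi map $p\mapsto[p]-[p_0]$; in particular the $j$-invariants coincide and $J_{\X'}=J_\X$.

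For the monodromy comparison I would use the canonical identification
\[
H_1(\X'_b,\Z)\;\cong\;H^1(\X_b,\Z),
\]
which comes from the description of $\Pic^0(\X_b)$ as the complex torus $H^1(\X_b,\Ooo_{\X_b})/H^1(\X_b,\Z)$ via the exponential sequence: the period lattice of any complex torus is its first homology. This identification is natural in the curve, and from the construction of the relative Jacobian as the sheaf quotient $R^1\pi_*\Ooo_\X/R^1\pi_*\Z$ one sees that it upgrades to an isomorphism of local systems on $B_{\mathrm{sm}}$ intertwining the two monodromy representations. Dualizing (by the universal coefficient theorem, or equivalently Poincar\'e duality on the one-dimensional torus $\X'_b$), one obtains a canonical isomorphism of local systems $H^1(\X'_b,\Z)\cong H^1(\X_b,\Z)^\vee$.

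To conclude, fix a basis of $H^1(\X_b,\Z)$ in which $\Mon(\X)$ is represented by a subgroup $G\subset\SL(2,\Z)$, and take the dual basis of $H^1(\X'_b,\Z)$. In these bases an element $M\in G$ acts on $H^1(\X'_b,\Z)$ as $(M^t)^{-1}=\tau(M)$, so $\Mon(\X')=\tau(G)$; any other choice of basis replaces this by a conjugate subgroup in $\SL(2,\Z)$, which is exactly what the lemma asserts. The only point one has to handle with care is the naturality in families of the isomorphism $H_1(\Pic^0,\Z)=H^1$, but this is essentially built into the sheaf-theoretic construction of the relative Jacobian, so no serious obstacle arises.
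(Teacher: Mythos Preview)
Your proof is correct and follows essentially the same approach as the paper: both take $\X'=\Pic^0(\X/B)$ with its zero section, observe that $J_{\X'}=J_\X$ because $\Pic^0(C)\cong C$ for a genus~$1$ curve, and deduce the monodromy statement from the canonical isomorphism $R^1p'_*\Z\cong\HOM(R^1\pi_*\Z,\Z)$ of local systems. The paper simply cites Mumford for this last isomorphism, whereas you spell it out via the exponential-sequence description of the Jacobian as $H^1(\Ooo)/H^1(\Z)$; the content is the same.
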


\begin{proof}
Put $\X'=\Pic^0(\X/B)$.  As we have seen in the proof of
Proposition~\ref{folklore}, $J_\X=J_{\X'}$ and the family $p'\colon
\X'\to B$ has a section. Finally, $R^1p'_*\Z\cong \HOM(R^1p_*\Z,\Z)$,
where by \Z we mean the constant sheaf with the stalk~\Z (see for
example~\cite[\S\,9]{Mumford}), and this implies the assertion about
monodromy.
\end{proof}

\begin{proof}[Proof of Proposition~\ref{numcompts}]
Lemma~\ref{Pic} implies that we may assume that the family $\pi\colon
\X\to B$ in question has a section. Assuming that, put
\[
V=\{(p,q)\in\Aa^2\colon 4p^3+27q^2\ne0\}
\]
and consider the smooth family of
elliptic curves $\Bc\to V$ in which the fiber over $(p,q)$ is the
smooth projective model~$C_{p,q}$ of the curve with equation
$y^2=x^3+px+q$ and the section assigns to $(p,q)$ the ``point at
infinity'' of this model. It is well known (see for example \cite[Corollary to
Theorem~1]{Acampo}) that $\Mon(\Bc)=\SL(2,\Z)$.

Now put $\Aa^1_0=\Aa^1\setminus\{0\}$,
$\Aa^1_{0,1728}=\Aa^1\setminus\{0,1728\}$ and
\[
V'=J_\Bc^{-1}(\Aa^1_{0, 1728})=\{(p,q)\in\Aa^2\colon
4p^3+27q^2\ne0,\  p\ne0,q\ne0\}. 
\]
Let $\Bc'$ be the restriction of the family~$\Bc$ to $V'$;
put $B'=J_\X^{-1}(\Aa^1_{0,1728})$, and let $\X'$ be the restriction of \X
to $B'$. Proposition~\ref{prop:epi} implies that
$\Mon(\X')=\Mon(\X)$ and $\Mon(\Bc')=\Mon(\Bc)=\SL(2,\Z)$.

Observe that there exists an isomorphism $g\colon V\to
\Aa^1_0\times \Aa^1_{0,1728}$ such that the diagram
\[
\xymatrix{
{V'}\ar[rr]^(.4)g\ar[dr]_{J_{\X'}}&&{\Aa^1_0\times \Aa^1_{0,1728}}\ar[dl]^{\pr_2}\\
&{\Aa^1_{0,1728}}
}
\]
is commutative. Indeed, one can define $g$ by the formula
$(p,q)\mapsto (q/p, j(C_{p,q}))$, and the inverse morphism will be
\[
(\lambda,j)\mapsto
\left(\frac{\lambda^2}{\frac4{27}\left(\frac{1728}j-1\right)},
\frac{\lambda^3}{\frac4{27}\left(\frac{1728}j-1\right)}\right).
\]
Hence, in the fibered product
\[
\xymatrix{
W\ar[r]^f\ar[d]_u&{V'}\ar[d]^{J_{\Bc'}}\\
B'\ar[r]_{J_{\X'}}&{\Aa^1_{0,1728}}
}
\]
(we mean fibered product in the category of reduced algebraic
variates, so $W$ is the scheme theoretic fibered product modulo
nilpotents) the variety $W$ is isomorphic to $(\Aa^1_0)\times B'$ (in
particular, $W$ is smooth and irreducible) and fibers of $f$ are
isomorphic to fibers of $J_{\X_0}$. Thus, the hypothesis implies that a
general fiber of the morphism $J_{\X_0}$ has $m$ connected
components. On the other hand, any fiber of the morphism $u$ is
irreducible since it is isomorphic to $\Aa^1_0$. Now
Proposition~\ref{prop:pullback} and Corollary~\ref{conn.fiber:gen}
imply that for the pullback families $f^*\Bc'$ and $u^*\X'$ on $W$,
the group $\Mon(f^*\Bc')$ is a subgroup of index at most~$m$ in
$\Mon(\Bc)=\SL(2,\Z)$ and $\Mon(u^*\X_0)=\Mon(\X_0)$ (as usual, this
equation holds up to a conjugation). 

Since $J_{f^*\Bc'}=J_{\Bc'}\circ f=J_{\X'}\circ
u=J_{u^*\X'}$, Proposition~\ref{twist=>mono} implies the result.
\end{proof}

\begin{note}
I do not know whether the bound in this proposition can be improved to
$(\PSL(2,\Z):\Mon(\X))\le m$ for $m>1$.
\end{note}

Proposition~\ref{numcompts} implies the following fact.

\begin{cor}\label{finite.index}
If $\X\to B$ is a non-isotrivial smooth family of curves of genus~$1$,
then its monodromy group is a subgroup of finite index in~$\SL(2,\Z)$.
\end{cor}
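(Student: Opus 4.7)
The plan is to derive this corollary essentially as a direct consequence of Proposition~\ref{numcompts}(ii). The two hypotheses of that proposition to check are that $J_\X$ is non-constant and that its general fiber has only finitely many connected components.

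First, I would reduce to the case in which $B$ is smooth, connected, and irreducible. Proposition~\ref{prop:epi} allows one to replace $B$ by the open dense subset $B_{\mathrm{sm}}$ (and, by definition of $\Mon(\X)$, we may restrict attention to a single irreducible component) without altering $\Mon(\X)$. So the hypotheses of Proposition~\ref{numcompts} can be arranged for $\X\to B$.

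Next, by the definition of isotriviality recalled in the Notation section, a family of curves of genus~$1$ is isotrivial if and only if all fibers share a common $j$-invariant, i.e.\ $J_\X$ is constant. The non-isotriviality hypothesis therefore translates exactly to the non-constancy of $J_\X$. Being a non-constant, and therefore dominant, morphism between irreducible varieties, $J_\X\colon B\to\Aa^1$ has general fibers that are closed subvarieties of $B$ of pure dimension $\dim B-1$. As algebraic varieties of finite type over~\C, such fibers have only finitely many irreducible components and hence only finitely many connected components; let $m$ denote this number for a general fiber. Applying Proposition~\ref{numcompts}(ii) then yields that $\Mon(\X)$ has index at most $2m$ in $\SL(2,\Z)$, which is finite.

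No serious obstacle is expected: the corollary is little more than a repackaging of Proposition~\ref{numcompts}(ii), once one matches the hypothesis ``non-isotrivial'' with ``$J_\X$ non-constant'' and invokes the standard fact that a fiber of a morphism of complex algebraic varieties of finite type has finitely many connected components.
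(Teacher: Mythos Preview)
Your proposal is correct and matches the paper's approach: the paper simply states that this corollary is implied by Proposition~\ref{numcompts}, and you have correctly spelled out why---non-isotriviality is equivalent to $J_\X$ being non-constant, and the general fiber of a morphism of finite-type varieties has finitely many connected components, so part~(ii) applies with some finite~$m$.
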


Here is the first application of what we proved.

\begin{proposition}\label{isotrivial}
If $B$ is a smooth algebraic variety with abelian fundamental group,
then any smooth family of curves of genus~$1$ over $X$ must be
isotrivial.
\end{proposition}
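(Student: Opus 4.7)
The plan is to argue by contradiction, combining Corollary~\ref{finite.index} with the classical fact that $\SL(2,\Z)$ admits no abelian subgroup of finite index.

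Suppose $\pi\colon\X\to B$ is a smooth family of curves of genus~$1$ that is \emph{not} isotrivial. Since $B$ is smooth, we have $B_{\mathrm{sm}}=B$, and $\Mon(\X)\subset\SL(2,\Z)$ is the image of the monodromy representation $\pi_1(B,b_0)\to\SL(2,\Z)$. Because $\pi_1(B)$ is assumed abelian, this image $\Mon(\X)$ is abelian. On the other hand, Corollary~\ref{finite.index} tells us that $\Mon(\X)$ has finite index in $\SL(2,\Z)$.

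It remains to observe that no subgroup of $\SL(2,\Z)$ of finite index can be abelian. This is classical: the principal congruence subgroup $\Gamma(2)$ has index~$12$ in $\SL(2,\Z)$, and it is well known that $\Gamma(2)/\{\pm I\}$ is free of rank~$2$. Hence, if $H\subset\SL(2,\Z)$ is any finite-index subgroup, then $H\cap\Gamma(2)$ has finite index in $\Gamma(2)$, and its image in $\Gamma(2)/\{\pm I\}\cong F_2$ has finite index in $F_2$. By the Nielsen--Schreier formula, a finite-index subgroup of $F_2$ is itself free of rank $\ge 2$, hence non-abelian; consequently $H\cap\Gamma(2)$, and therefore $H$, is non-abelian. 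This contradicts the fact that $\Mon(\X)$ is abelian of finite index, proving that $\pi$ must be isotrivial.

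There is no real obstacle in this argument once Corollary~\ref{finite.index} is available: the only ingredient beyond the main result is the structural fact about $\Gamma(2)$, which is standard.
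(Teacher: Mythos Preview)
Your proof is correct and follows essentially the same route as the paper's: argue by contradiction, invoke Corollary~\ref{finite.index} to get finite index, then use that $\Gamma(2)/\{\pm I\}\cong F_2$ together with Schreier's theorem to rule out an abelian subgroup of finite index. The only slip is the numerical claim that $(\SL(2,\Z):\Gamma(2))=12$; the index is~$6$ (since $\SL(2,\Z/2\Z)\cong S_3$), but this is irrelevant to the argument.
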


\begin{proof}
Suppose that $\pi\colon \X\to B$ is a smooth family of curves of
genus~$1$, where $B$ is smooth and irreducible and $\pi_1(B)=G$ is
abelian. 

We are to show that the $J$-map $J_\X\colon B\to\Aa^1$ is constant. If
this is not the case, then Corollary~\ref{finite.index} asserts that
the monodromy group $\Mon(\X)$ of the family~\X has finite index in
$\SL(2,\Z)$. Since $\Gamma(2)$ has finite index in $\SL(2,\Z)$, one
has $(\Gamma(2):\Gamma(2)\cap\Mon(\X))<\infty$. If $G$ is the image of
$\Gamma(2)\cap\Mon(\X)$ in $\Gamma(2)/\{\pm I\}$, then $G$ is
an abelian subgroup of finite index in
$\Gamma(2)/\{\pm I\}$. The latter group is isomorphic to the
free group~$F_2$ with two generators, and Schreier's theorem on
subgroups of free groups implies that $F_2$ contains no abelian
subgroup of finite index. We arrived at the desired contradiction.
\end{proof}

For the case of non-commutative $\pi_1$ of the base, one can obtain
an upper bound on the index of monodromy groups in non-isotrivial
families.

\begin{proposition}\label{lower_bound}
Suppose that $\X\to B$ is a smooth non-isotrivial family of curves of
genus~$1$ over a smooth base $B$ and that $\pi_1(B)$ can be generated by
$r\ge2$ elements. Then $(\SL(2,\Z):\Mon(\X))\leq 12(r-1)$.
\end{proposition}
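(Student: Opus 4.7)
The plan is to combine Corollary~\ref{finite.index} with the facts, already used in the proof of Proposition~\ref{isotrivial}, that $\Lambda := \Gamma(2)/\{\pm I\}$ is free of rank~$2$ and has index~$6$ in $\PSL(2,\Z)$, together with the general Schreier bound on the minimal number of generators of a finite-index subgroup.

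First I would set $G = \Mon(\X)\subset\SL(2,\Z)$. Since $G$ is a homomorphic image of $\pi_1(B)$, it is generated by $r$ elements, and by Corollary~\ref{finite.index} it has finite index in $\SL(2,\Z)$. Let $\bar G\subset\PSL(2,\Z)$ be its image under the projection $\SL(2,\Z)\to\PSL(2,\Z)$; it is also generated by $r$ elements, and $[\SL(2,\Z):G]\le 2\,[\PSL(2,\Z):\bar G]$ regardless of whether $-I\in G$. Thus it suffices to show that $n := [\PSL(2,\Z):\bar G] \le 6(r-1)$.

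Next I would set $H = \bar G\cap\Lambda$ and write $k = [\bar G:H]$, $l = [\Lambda:H]$, so that $nk = [\PSL(2,\Z):H] = 6l$. On one hand, the Schreier generator bound---obtained by lifting a generating set of $\bar G$ to the free group of rank $r$ and applying Nielsen--Schreier there---gives that $H$ can be generated by $k(r-1)+1$ elements. On the other hand, $H$ is a subgroup of the free group $\Lambda$ of rank $2$ and index $l$, so Nielsen--Schreier gives that its rank equals exactly $l+1$. Combining these yields $l+1\le k(r-1)+1$, hence $l\le k(r-1)$ and $n = 6l/k \le 6(r-1)$, whence $[\SL(2,\Z):G]\le 12(r-1)$.

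Once Corollary~\ref{finite.index} and the identification $\Lambda\cong F_2$ are in hand, the argument is essentially a bookkeeping exercise; the only mild subtlety is invoking the general Schreier bound, which must be applied even though $\bar G$ itself need not be free.
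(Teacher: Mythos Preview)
Your proof is correct and follows essentially the same route as the paper's Lemma~\ref{lemma:Schreier}: intersect with (the image of) $\Gamma(2)$, use the Nielsen--Schreier index formula inside $F_r$ to bound the number of generators of this intersection, and then use Nielsen--Schreier inside $\Gamma(2)/\{\pm I\}\cong F_2$ to convert that generator bound into an index bound. The only cosmetic difference is that you pass to $\PSL(2,\Z)$ at the very start and pay the factor~$2$ via $[\SL(2,\Z):G]\le 2[\PSL(2,\Z):\bar G]$, whereas the paper stays in $\SL(2,\Z)$ and $\Gamma(2)$ and passes to $\Gamma(2)/\{\pm I\}$ only at the end; your integers $k$ and $l$ correspond exactly to the paper's $d$ and $m-1$.
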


\begin{cor}
Suppose that $\X\to C$ is a non-isotrivial family of elliptic curves
over a smooth curve of genus~$g$, with $s$ degenerate fibers. Then
$(\SL(2,\Z):\Mon(\X))\leq 12(2g+s-1)$. 
\end{cor}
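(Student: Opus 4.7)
The plan is to reduce to Proposition~\ref{lower_bound} applied to the smooth part of the family. Let $D\subset C$ be the finite set of points over which $\pi\colon\X\to C$ has degenerate fibers, so $|D|=s$, and put $U=C\setminus D$. By Convention~\ref{conv:mono}, $\Mon(\X)=\Mon(\X|_U)$, and $\X|_U\to U$ is a smooth non-isotrivial family of curves of genus~$1$ over the smooth base~$U$.

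The essential input is a count of generators of $\pi_1(U)$. The standard presentation of the fundamental group of a compact Riemann surface of genus~$g$ with $s$ punctures has $2g+s$ generators $a_1,b_1,\ldots,a_g,b_g,c_1,\ldots,c_s$ (the $c_i$ being small loops around the punctures), subject to the single relation $[a_1,b_1]\cdots[a_g,b_g]\,c_1\cdots c_s=1$. In particular $\pi_1(U)$ can be generated by $r:=2g+s$ elements. I would deliberately not use the sharper fact that for $s\ge1$ the group is free of rank $2g+s-1$, since the stated bound $12(2g+s-1)$ comes out exactly by taking $r=2g+s$.

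To apply Proposition~\ref{lower_bound} we need $r\ge2$. The only remaining cases $r\in\{0,1\}$ correspond to $(g,s)\in\{(0,0),(0,1)\}$, in which $U$ is $\PP^1$ or $\Aa^1$ and $\pi_1(U)=1$; Proposition~\ref{isotrivial} then forces $\X|_U$ to be isotrivial, contradicting the hypothesis. Hence $r=2g+s\ge 2$, and Proposition~\ref{lower_bound} yields
\[
(\SL(2,\Z):\Mon(\X))=(\SL(2,\Z):\Mon(\X|_U))\le 12(r-1)=12(2g+s-1),
\]
as required. There is no genuine obstacle here beyond being careful that the chosen $r$ matches the right-hand side of the stated inequality; everything else is a direct citation.
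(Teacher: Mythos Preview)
Your proof is correct and is precisely the argument the paper has in mind: the corollary is stated immediately after Proposition~\ref{lower_bound} without proof, as a direct specialization obtained by noting that $\pi_1$ of a genus-$g$ curve with $s$ punctures is generated by $2g+s$ elements. Your careful handling of the degenerate cases $(g,s)\in\{(0,0),(0,1)\}$ via Proposition~\ref{isotrivial} is a nice touch that the paper leaves implicit.
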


Proposition~\ref{lower_bound} is a consequence of the following
elementary lemma.

\begin{lemma}\label{lemma:Schreier}
Suppose that $G\subset \SL(2,\Z)$ is a subgroup of finite index and
that $G$ can be generated by $r$ elements. Then $(\SL(2,\Z):G)\le
12(r-1)$.   
\end{lemma}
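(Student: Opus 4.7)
The plan is to pass to $\PSL(2,\Z)$, intersect with the rank-$2$ free subgroup $\bar\Gamma(2):=\Gamma(2)/\{\pm I\}$ (which has index~$6$ in $\PSL(2,\Z)$), and invoke Schreier's generator bound together with the Nielsen-Schreier rank formula.

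First, let $\bar G$ be the image of $G$ in $\PSL(2,\Z)$. The kernel of $\SL(2,\Z)\to\PSL(2,\Z)$ has order~$2$, so $(\SL(2,\Z):G)\le 2(\PSL(2,\Z):\bar G)$; moreover $\bar G$, being a quotient of $G$, is generated by at most $r$ elements. Hence it suffices to prove $(\PSL(2,\Z):\bar G)\le 6(r-1)$.

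Next, set $\bar F:=\bar G\cap\bar\Gamma(2)$ and $d:=(\bar G:\bar F)$, so that $d\le 6$ and $\bar F$ has finite index in $\bar\Gamma(2)$. Schreier's rewriting process shows that a subgroup of index~$d$ in a group generated by $r$ elements can itself be generated by at most $1+d(r-1)$ elements, so this bound applies to $\bar F$. But $\bar F$ is a finite-index subgroup of the free group $\bar\Gamma(2)\cong F_2$ of rank~$2$, so the Nielsen-Schreier formula gives $\mathrm{rank}(\bar F)=1+(\bar\Gamma(2):\bar F)$; comparing the two, $(\bar\Gamma(2):\bar F)\le d(r-1)$. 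Evaluating the index $(\PSL(2,\Z):\bar F)$ along the two towers $\bar F\subset\bar\Gamma(2)\subset\PSL(2,\Z)$ and $\bar F\subset\bar G\subset\PSL(2,\Z)$ then yields $6(\bar\Gamma(2):\bar F)=d\cdot(\PSL(2,\Z):\bar G)$, and dividing by~$d$ gives $(\PSL(2,\Z):\bar G)\le 6(r-1)$, which combined with the reduction above delivers $(\SL(2,\Z):G)\le 12(r-1)$. The only non-routine input is Schreier's upper bound $1+d(r-1)$ on the number of generators of a finite-index subgroup of a finitely generated group, and this is entirely classical; I expect no serious obstacle.
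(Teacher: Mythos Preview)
Your proof is correct and follows essentially the same route as the paper's: intersect with the free subgroup $\Gamma(2)/\{\pm I\}\cong F_2$, use the Schreier generator bound $1+d(r-1)$ to control the rank of that intersection, apply the Nielsen--Schreier rank formula in $F_2$, and compare indices along the two towers. The only cosmetic difference is that you project to $\PSL(2,\Z)$ at the outset and work there, whereas the paper stays in $\SL(2,\Z)$, pulls $G$ back to an explicit $F_r$, and passes to $\Gamma(2)/\{\pm I\}$ only after intersecting; your ordering is slightly cleaner but the substance is identical.
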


\begin{proof}[Proof of the lemma]
Throughout the proof, free group with $m$ generators will be denoted
by $F_m$.

Since $G$ can be generated by $r$ elements, there exists an
epimorphism $\pi\colon F_r\twoheadrightarrow G$. Putting
$H=p^{-1}(G\cap\Gamma(2))$, one obtains the following commutative diagram
of embeddings and surjections:
\begin{equation}\label{eq:Schreier}
\xymatrix{
H\ar@{->>}[r]_-{p'}\ar@{^{(}->}[d]^{\text{index}=d}&{G\cap\Gamma(2)}
\ar@{^{(}->}[d]^{\text{index}=d}\ar@{^{(}->}[r]&  
  {\Gamma(2)}\ar@{^{(}->}[d]^{\text{index}=6}\\
F_r\ar@{->>}[r]_p&G\ar@{^{(}->}[r]_-i&{\SL(2,\Z)}\ar@{->>}[r]_-j&
\rlap{$\SL(2,\Z)/\Gamma(2)\cong S_3$.}
}
\end{equation}
If $d\le 6$ is the order of $\im(j\circ i)$, then $(G:G\cap
\Gamma(2))=(F_r:H)=d$, so by Schreier's theorem $H\cong F_{d(r-1)+1}$.
Since the morphism~$p'$ is surjective, the group $G\cap \Gamma(2)$ can
be generated by $d(r-1)+1$ elements. Put $F=\Gamma(2)/\{\pm I\}$, and
let $\pi\colon \Gamma(2)\to F$ be the natural projection. The subgroup
$\pi(G\cap \Gamma(2))\subset F$ can be also generated by $d(r-1)+1$
elements; since $F\cong F_2$, Schreier's theorem implies that
$\pi(G\cap \Gamma(2))\cong F_m$, where $m\le d(r-1)+1$ (indeed, the
free group $F_n$ cannot be generated by $k<n$ elements). Applying
Schreier's for the third time, we obtain that
$(F:\pi(H\cap\Gamma(2)))=m-1\le d(r-1)$, whence $(\Gamma(2):G\cap
\Gamma(2))\le 2(m-1)\le 2d(r-1)$. It follows from the right-hand
square of the diagram~\eqref{eq:Schreier} that
\[
(\SL(2,\Z):G)=\frac{(\SL(2,\Z):\Gamma(2))\cdot(\Gamma(2):G\cap\Gamma(2))}
{(G:G\cap\Gamma(2))}\le \frac{12d(r-1)}d,
\]
whence the result.
\end{proof}

\begin{proof}[Proof of Proposition~\ref{lower_bound}]
Put $\Mon(\X)=G\subset\SL(2,\Z)$. Since $\pi_1(B)$ can be generated by
$r$ elements, the same is true for $G$; now
Corollary~\ref{finite.index} implies that $(\SL(2,\Z):G)<+\infty$, and
Lemma~\ref{lemma:Schreier} applies.
\end{proof}

Using Proposition~\ref{numcompts} one can obtain other lower bounds
for monodromy groups. Observe first that the named proposition
immediately implies the following corollary, in the statement of which
we use Convention~\ref{conv:mono}.

\begin{cor}\label{simple_cor}
If $\pi\colon\X\to C$ is a family of elliptic curves over a smooth
projective curve~$C$ and if~$J_\X\colon C\dasharrow\Aa^1$ is its
$J$-map, and if $J_\X$ is not constant, then $(\SL(2,\Z):\Mon(\X))\le
2\deg J_\X$.
\end{cor}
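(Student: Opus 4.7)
The plan is to reduce the claim to Proposition~\ref{numcompts} applied on the smooth locus of~$\pi$, and then to bound the number of connected components of a general fiber of the $J$-map using the degree of $J_\X$ as a rational map.

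First I would let $U\subset C$ be the Zariski open subset over which $\pi$ is smooth. By Convention~\ref{conv:mono}, $\Mon(\X)=\Mon(\X|_U)$, so it suffices to bound the index of $\Mon(\X|_U)$ in~$\SL(2,\Z)$. The base $U$ is smooth and irreducible (an open subset of the smooth projective curve~$C$), and the hypothesis that $J_\X$ is not constant translates to saying that the morphism $J_{\X|_U}\colon U\to\Aa^1$ is non-constant, so Proposition~\ref{numcompts} is applicable to~$\X|_U$.

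Next I would bound the number of components of a general fiber of $J_{\X|_U}$. Since $C$ is a smooth projective curve, the rational map $J_\X\colon C\dasharrow\Aa^1\subset\PP^1$ extends to a morphism $\bar J\colon C\to\PP^1$ whose degree is $d:=\deg J_\X$. A general fiber of~$\bar J$ consists of exactly $d$ distinct points of~$C$, so a general fiber of $J_{\X|_U}\colon U\to\Aa^1$, being contained in a fiber of~$\bar J$, consists of at most~$d$ points; in particular it has at most~$d$ connected components.

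Finally I would split into cases. If a general fiber of $J_{\X|_U}$ is connected, then Proposition~\ref{numcompts}(i) gives $\Mon(\X|_U)=\SL(2,\Z)$ and the bound $(\SL(2,\Z):\Mon(\X))\le 2d$ is trivially satisfied. Otherwise, writing $m\ge2$ for the number of components of a general fiber, Proposition~\ref{numcompts}(ii) yields $(\SL(2,\Z):\Mon(\X|_U))\le 2m$, and combining with $m\le d$ gives the desired inequality. There is no real obstacle beyond the bookkeeping of restricting to~$U$ and observing that the fiber count over $\Aa^1$ is bounded by the degree of the extended morphism to~$\PP^1$.
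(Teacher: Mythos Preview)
Your proof is correct and is exactly the immediate deduction the paper has in mind: the paper states only that the corollary follows directly from Proposition~\ref{numcompts}, and your argument spells out the one observation needed, namely that a general fiber of $J_{\X|_U}$ has at most $\deg J_\X$ points and hence at most $\deg J_\X$ connected components.
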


If \X is smooth and $\pi$ has a section, one can be more specific.

\begin{proposition}\label{ell_surf}
Suppose that $\pi\colon\X\to C$ is a minimal smooth elliptic surface
with section \textup(it means that \X is a smooth projective surface,
$C$ is a smooth projective curve, the general fiber of $\pi$ is a
smooth curve of genus~$1$, no fiber of $\pi$ contains a rational
$(-1)$-curve, and $p$ has a section\textup) and that $J_\X$ is not
constant.

Then
\begin{equation}\label{eq:ell_surf}
(\SL(2,\Z):\Mon(\X))\le 2\cdot \sum_{s\in C} e(s),
\end{equation}
where $e(s)=n$ if the fiber over $s$ is a cycle of $n$ smooth rational
curves or the nodal rational curve if $n=1$ \textup(type~$\mathrm I_n$
in Kodaira's classification~\cite{Kodaira,Miranda}\textup), $e(s)=n$
if the fiber over $s$ consists of $n+5$ smooth rational curves with
intersection graph isomorphic to the extended Dynkin graph~$\tilde
D_{n+4}$, $n\ge 1$ \textup(type $\mathrm I^*_n$ in Kodaira's
classification\textup), and $e(s)=0$ otherwise.
\end{proposition}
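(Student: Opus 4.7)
The plan is to combine Corollary~\ref{simple_cor} with the classical expression for $\deg J_\X$ in terms of Kodaira's classification of singular fibers.

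First I would invoke Corollary~\ref{simple_cor}, which yields $(\SL(2,\Z):\Mon(\X))\le 2\deg J_\X$; here $J_\X$ is naturally extended to a morphism $C\to\PP^1$ (since $C$ is a smooth projective curve and $\Aa^1\subset\PP^1$), and its degree is the total pole order of~$J_\X$ summed over the points of~$C$.

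The main step is then to identify these pole orders with the integers $e(s)$. For a minimal smooth elliptic surface with section, Kodaira's classification of singular fibers, as tabulated and analyzed by Miranda~\cite{Miranda}, tells us precisely when $J_\X$ acquires a pole: namely, $J_\X$ has a pole of order $n$ at $s\in C$ exactly when the fiber over $s$ is of Kodaira type~$\mathrm{I}_n$ or $\mathrm{I}^*_n$ with $n\ge 1$, while for all other Kodaira types (smooth fibers, and the types $\mathrm{II}, \mathrm{III}, \mathrm{IV}, \mathrm{I}^*_0, \mathrm{II}^*, \mathrm{III}^*, \mathrm{IV}^*$) the value of $J_\X$ at $s$ is finite (equal to $0$, $1728$, or some other finite complex number). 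This matches the definition of~$e(s)$ exactly, so $\deg J_\X = \sum_{s\in C} e(s)$ and the bound~\eqref{eq:ell_surf} follows.

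The only substantive input beyond Corollary~\ref{simple_cor} is Miranda's identification of pole orders of $J_\X$ with the integers appearing in Kodaira's classification of singular fibers; I do not anticipate any significant obstacle beyond locating this table in~\cite{Miranda}. Note that the asymmetry between types~$\mathrm I_n$/$\mathrm I^*_n$ and the remaining degenerate types is precisely the content of Miranda's analysis: in the latter cases the monodromy around the singular fiber is of finite order, so $J_\X$ stays bounded near~$s$, whereas in the former cases the monodromy is of infinite order and $J_\X$ tends to infinity.
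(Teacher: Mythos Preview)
Your proposal is correct and follows essentially the same route as the paper: invoke Corollary~\ref{simple_cor} to get $(\SL(2,\Z):\Mon(\X))\le 2\deg J_\X$, then identify $\deg J_\X=\sum_{s\in C}e(s)$ via Miranda's analysis of the pole orders of the $J$-map (the paper cites this as Corollary~IV.4.2 of~\cite{Miranda}). Your discursive description of which Kodaira types contribute poles is precisely the content of that corollary.
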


\begin{proof}
In view of Corollary~\ref{simple_cor} the index in the left-hand side
of~\eqref{eq:ell_surf} is less or equal to $2\deg J_\X$, and $\deg
J_\X$ equals $\sum_{s\in C} e(s)$ by virtue of Corollary IV.4.2
from~\cite{Miranda}.
\end{proof}

Similarly, one can express $\deg J_\X$ (and obtain a lower bound for
$\Mon(\X)$) using the information about the points where $j$-invariant
of the fiber (smooth or not) equals~$0$ or~$1728$, see for
example~\cite[Lemma~IV.4.5, Table~IV.3.1]{Miranda} and Table. In the notation
if~\cite{Miranda}, $j$-invariant is 1728 times less than that defined
by~\eqref{j:def}; of course, this does not affect multiplicities of
poles.

\section{A remark on families of hyperelliptic curves}\label{sec:h-ell}

\begin{proposition}\label{mono:hyper}
If $\pi\colon\X\to B$ is a smooth family of hyperelliptic curves of
genus~$g>2$, then
\[
(\Sp(2g,\Z):\Mon(\X))\ge
\frac{2^{g^2}(2^{2g}-1)(2^{2(g-1)}-1)\cdot\dots\cdot (2^2-1)}{(2g+2)!}.
\]
\end{proposition}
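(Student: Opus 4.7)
The plan is to estimate the index from below by passing to the reduction modulo~$2$. If I can show that the image $\bar M\subset\Sp(2g,\F_2)$ of $\Mon(\X)$ under the surjection $\Sp(2g,\Z)\twoheadrightarrow\Sp(2g,\F_2)$ has order at most $(2g+2)!$, then
\[
(\Sp(2g,\Z):\Mon(\X))\ge(\Sp(2g,\F_2):\bar M)\ge\frac{|\Sp(2g,\F_2)|}{(2g+2)!},
\]
and substituting $|\Sp(2g,\F_2)|=2^{g^2}(2^{2g}-1)(2^{2(g-1)}-1)\cdots(2^2-1)$ yields the stated bound.

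To bound $|\bar M|$, I would first use that, for $g\ge 2$, each fiber $X_b$ of $\pi$ has a unique hyperelliptic involution $\iota_b$; by uniqueness these glue to a global involution $\iota\colon\X\to\X$ over~$B$, whose fixed locus $W\subset\X$ is finite étale of degree $2g+2$ over~$B$ (the relative Weierstrass divisor). Étaleness holds because smoothness of $\pi$ forces the $2g+2$ Weierstrass points on every fiber to be distinct and the tangent action of $\iota_b$ at each of them to have the eigenvalue~$-1$. The monodromy of $W\to B$ supplies a homomorphism $\sigma\colon\pi_1(B,b)\to S_{2g+2}$ permuting the $2g+2$ Weierstrass points $W_b$ of $X_b$.

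The central step would be to show that the mod-$2$ monodromy $\rho_2\colon\pi_1(B,b)\to\Sp(2g,\F_2)$ factors as $\rho_2=\psi\circ\sigma$, where $\psi\colon S_{2g+2}\to\Sp(2g,\F_2)$ is the standard symplectic representation of the symmetric group. For this I would invoke the classical identification of the $2$-torsion $\Pic(X_b)[2]\cong H^1(X_b,\F_2)$ with the $\F_2$-vector space of even-cardinality subsets of $W_b$ modulo the full subset (equivalently, generated by the classes $[p_i-p_j]$, $p_i,p_j\in W_b$, which are $2$-torsion because $2p_i$ and $2p_j$ are both pullbacks of points of $\PP^1$ under the hyperelliptic $2:1$ covering). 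Since every element of $\pi_1(B,b)$ is represented by a self-homeomorphism of $X_b$ that commutes with $\iota_b$ (the globality of $\iota$ being essential here), its action on the above generators is induced purely by its permutation of $W_b$, which is exactly what $\sigma$ records. From $\rho_2=\psi\circ\sigma$, the bound $|\bar M|\le(2g+2)!$ is immediate.

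The main obstacle is the factorization $\rho_2=\psi\circ\sigma$: it combines the geometric fact that monodromy commutes with the hyperelliptic involution (automatic from the globality of~$\iota$) with the algebraic identification of $H^1(X_b,\F_2)$ with the permutation module on~$W_b$ modulo the relation $\sum_{p\in W_b}[p-p_0]=0$. I would establish the algebraic side by presenting $\Pic(X_b)[2]$ explicitly via Weierstrass-point differences and verifying compatibility of this presentation with the symplectic form. Note that the hypothesis $g>2$ enters only to make the bound non-trivial: for $g\le 2$ the homomorphism $\psi$ is already surjective, so the right-hand side of the inequality is $\le 1$.
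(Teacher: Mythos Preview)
Your proposal is correct and follows essentially the same route as the paper: reduce modulo~$2$, identify $H^1(X_b,\F_2)\cong\Pic(X_b)[2]$ as generated by differences $P_i-P_j$ of Weierstrass points, conclude that the mod-$2$ monodromy is determined by the permutation action on the $2g+2$ Weierstrass points and hence has order at most $(2g+2)!$, and divide into $|\Sp(2g,\F_2)|$. You supply somewhat more scaffolding (the global involution, \'etaleness of the relative Weierstrass divisor, the explicit factorization $\rho_2=\psi\circ\sigma$), but the argument is the same.
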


\begin{cor}\label{mono:hell:cor}
If $\pi\colon\X\to B$ is a smooth family of hyperelliptic curves of
genus~$g>2$, then $\Mon(\X)$ is a proper subgroup of $\Sp(2g,\Z)$.
\end{cor}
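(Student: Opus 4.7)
The corollary is a direct consequence of Proposition~\ref{mono:hyper}: to conclude that $\Mon(\X)$ is a proper subgroup of $\Sp(2g,\Z)$ it is enough to show that the explicit lower bound
\[
N(g):=\frac{2^{g^2}(2^{2g}-1)(2^{2(g-1)}-1)\cdots (2^2-1)}{(2g+2)!}
\]
on the index $(\Sp(2g,\Z):\Mon(\X))$ already exceeds~$1$ for every integer $g\ge3$. Thus the whole argument reduces to an elementary numerical estimate, and there is no real conceptual obstacle.

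For the base case $g=3$ I would simply compute: the numerator equals $2^{9}\cdot 63\cdot 15\cdot 3=1{,}451{,}520$ and the denominator equals $8!=40{,}320$, giving $N(3)=36>1$. This already handles the smallest case to which the proposition applies.

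For larger $g$ I would establish that $N(g)$ is monotonically increasing on $g\ge3$ by examining the ratio
\[
\frac{N(g+1)}{N(g)}=\frac{2^{2g+1}\bigl(2^{2g+2}-1\bigr)}{(2g+3)(2g+4)}.
\]
Using the trivial estimates $2^{2g+2}-1\ge 2^{2g+1}$ and $(2g+3)(2g+4)\le (2g+4)^{2}$ one obtains $N(g+1)/N(g)\ge 2^{4g+2}/(2g+4)^{2}$, and the exponential $2^{4g+2}$ obviously dominates the polynomial $(2g+4)^{2}$ for every $g\ge3$ (the ratio is already above $160$ at $g=3$). Combining this monotonicity with $N(3)=36$ yields $N(g)\ge 36>1$ for all $g\ge 3$, which is exactly what is needed. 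The only step requiring care is to quote Proposition~\ref{mono:hyper} correctly; the inequalities themselves are routine.
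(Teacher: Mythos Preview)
Your proof is correct and follows exactly the approach implicit in the paper: the corollary is stated immediately after Proposition~\ref{mono:hyper} with no separate proof, so the intended argument is precisely to observe that the lower bound $N(g)$ exceeds~$1$ for $g\ge3$. Your explicit verification (computing $N(3)=36$ and checking monotonicity via the ratio $N(g+1)/N(g)$) simply spells out what the paper leaves to the reader.
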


\begin{proof}[Proof of Proposition~\ref{mono:hyper}]
In this proof, $\Mon(\X,\Z)$ will denote the monodromy group acting on
the integer $H^1$ of a fiber of~\X, and $\Mon(\X,\Z/2\Z)$ will stand
for the monodromy group acting on cohomology with coefficients in
$\Z/2\Z$.

Since the reduction modulo~$2$ mapping
$\Sp(2g,\Z)\to\Sp(2g,\Z/2\Z)$ is surjective, one has
\[
(\Sp(2g,\Z):\Mon(\X,\Z))\ge(\Sp(2g,\Z/2\Z):\Mon(\X,\Z/2\Z)),
\]
so it suffices to show that
\begin{equation}\label{eq:h-ell2}
(\Sp(2g,\Z/2\Z):\Mon(\X,\Z/2\Z))\ge
\frac{2^{g^2}(2^{2g}-1)(2^{2(g-1)}-1)\cdot\dots\cdot (2^2-1)}{(2g+2)!}.
\end{equation}
To that end, let $X$ be a hyperelliptic curve of genus $g\ge2$ that is
a fiber of~\X; denote its Weierstrass points by \lst P{2g+2}. It is
well known (see for example~\cite[Lemma 2.1]{Cornelissen}) that the
$2$-torsion subgroup $(\Pic(X))_2\subset\Pic(X)$ is generated by
classes of divisors $P_i-P_j$. Since $\Pic(X)_2\cong H^1(X,\Z/2\Z)$,
the action of $\pi_1(B_{\mathrm{sm}})$ on $H^1(X,\Z/2\Z)$ is
completely determined by the permutations of the Weierstrass points
\lst P{2g+2} it induces. Thus, order of $\Mon(\X,\Z/2\Z)$ is at most
$(2g+2)!$. Since
\[
(\Sp(2g,\Z/2\Z):1)= 2^{g^2}(2^{2g}-1)(2^{2(g-1)}-1)\cdot\dots\cdot
(2^2-1), 
\]
the proposition follows.
\end{proof}

\begin{note}
  The bound in Proposition~\ref{mono:hyper} is sharp, which follows
  from A'Campo's paper~\cite{Acampo}. To wit, for any $g\ge2$ let us
  regard $\Aa^{2g+1}$ as the space of polynomials
\[
P(x)=x^{2g+2}+a_{2g}x^{2g}+\dots+a_1x+a_0,
\]
and denote the space of polynomials~$P\in\Aa^{2g+1}$ with a multiple
root by $\Sigma\subset\Aa^{2g+1}$. If \X is a family over
$\Aa^{2g+1}\setminus\Sigma$ in which the fiber over~$P$ is the smooth
projective model of the curve with equation $y^2=P(x)$ (which is
hyperelliptic of genus~$g$), then part of the corollary on page~319
of~\cite{Acampo} can be restated to the effect that index
$(\Sp(2g,\Z):\Mon(\X))$ is equal to the right-hand side
of~\eqref{eq:h-ell2}. Actually, not only the order of $\Mon(\X)$ is
known: a description of this group can be found in the appendix
to~\cite{Chmutov}, which (the appendix) is devoted to the exposition
of results of A.Varchenko.
\end{note}

\section{Appendix: an application to Del Pezzo surfaces}\label{sec3}

\epigraph{%
Robinson said, `It was only to be expected.'
}{%
--Muriel Spark, \emph{Robinson}}

\noindent
In this section, by way of an application of
Proposition~\ref{numcompts}, we prove the following fact.

\begin{proposition}\label{delPezzo}
If $X\subset\PP^n$ is a Del Pezzo surface embedded by \textup(a
subsystem of\textup) the anticanonical linear system~$|-K_X|$, then
the monodromy group acting on $H^1(\cdot,\Z)$ of its smooth hyperplane
sections is the entire $\SL(2,\Z)$.
\end{proposition}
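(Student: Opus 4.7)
The plan is to apply Proposition~\ref{numcompts}(i) to the family of smooth hyperplane sections. Let $V \subset H^0(X,-K_X)$ be the subspace cutting out the embedding, let $B\subset\PP(V)$ be the Zariski open locus parametrizing smooth divisors, and let $\pi\colon\X\to B$ be the universal family. By adjunction, smooth elements of $|-K_X|$ have genus~$1$, so $\pi$ is a smooth family of curves of genus~$1$; it therefore suffices to show that the $J$-map $J_\X\colon B\to\Aa^1$ is non-constant and has connected general fiber.

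Non-constancy is easy. Realize $X$ as either $\PP^1\times\PP^1$ or $\mathrm{Bl}_{p_1,\ldots,p_k}\PP^2$ with $k=9-d\le6$ points in general position, where $d=\deg X\ge 3$. In the blowup case, smooth anticanonical curves on $X$ are proper transforms of smooth cubics in $\PP^2$ through $p_1,\ldots,p_k$, and such cubics visibly have varying $j$-invariants; the $\PP^1\times\PP^1$ case is analogous via bidegree-$(2,2)$ curves.

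The main obstacle is the connectedness of the general $J_\X$-fiber. For $X=\mathrm{Bl}_{p_1,\ldots,p_k}\PP^2$, the system $|-K_X|$ sits as the linear subspace of $|3H|\cong\PP^9$ of cubics vanishing at $p_1,\ldots,p_k$, of codimension $k\le6$, and $J_\X$ is the restriction of the $J$-map on $|3H|$. On $|3H|$, a general $J$-fiber is a single $\operatorname{PGL}_3$-orbit of smooth cubics---an irreducible subvariety of $\PP^9$ of dimension~$8$---so by Bertini's theorem on linear sections of irreducible varieties, its intersection with any sufficiently general linear subspace of dimension $\ge 2$ remains irreducible. The ``general position'' hypothesis makes this apply to a generic Del Pezzo of degree~$d$, and local constancy of the monodromy in the connected moduli of Del Pezzos of degree~$d$ (which is irreducible for $d\ne8$) then extends the conclusion to every Del Pezzo of that degree. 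The case $X=\PP^1\times\PP^1$ is handled directly: smooth $(2,2)$-curves up to $\operatorname{Aut}^0(\PP^1\times\PP^1)=\operatorname{PGL}_2\times\operatorname{PGL}_2$ are classified by the pair $(L_1,L_2)$ of degree-$2$ pullback bundles from the two rulings, and for fixed $j$-value the orbit parameter $L_1-L_2$ lives in $E/\{\pm1\}\cong\PP^1$, so each $J$-fiber is a connected union of $\operatorname{PGL}_2\times\operatorname{PGL}_2$-orbits. Finally, the case of a proper subsystem $V\subsetneq|-K_X|$ used for the embedding (automatically with $\dim\PP(V)\ge 3$) follows by one further Bertini step.
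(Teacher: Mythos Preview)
Your overall strategy---apply Proposition~\ref{numcompts}(i) by showing $J_\X$ is non-constant with connected general fiber---is the same endgame as the paper's, but your route to connectedness has a real gap, and it differs substantially from what the paper does.

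The gap is the Bertini step. You want to intersect the $8$-dimensional $\mathrm{PGL}_3$-orbit $S_j\subset\PP^9$ with the linear subspace $L_{p_1,\dots,p_k}=\{\text{cubics through }p_1,\dots,p_k\}$ and conclude irreducibility. But Bertini's irreducibility theorem speaks of a \emph{general} member of the Grassmannian $G(9-k,9)$, whereas the subspaces $L_{p_1,\dots,p_k}$ sweep out only a $2k$-dimensional family inside that $k(10-k)$-dimensional Grassmannian; there is no reason the Bertini-open locus meets this family. (For $k\le4$ one can in fact verify irreducibility by hand, using that $\mathrm{PGL}_3$ acts transitively on $k$-tuples in general position with connected stabilizers; but for $k=5,6$ the question becomes whether the moduli map $(C_0)^k\to (\PP^2)^k/\!/\mathrm{PGL}_3$ has irreducible general fiber, which is not obvious.) The subsequent ``local constancy of monodromy in moduli'' step, needed to pass from a generic Del Pezzo to all of them, is plausible but also left unjustified, and the final ``one further Bertini step'' for proper subsystems inherits exactly the same problem: $\PP(V)$ is again a specific, not general, linear subspace.

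The paper avoids all of this. It first reduces to the complete anticanonical system via isomorphic projection from points off $X$ (Lemma~\ref{iso_proj}), then projects repeatedly from general points \emph{on} $X$ to lower the degree (Lemmas~\ref{n->3} and~\ref{3->2}), arriving at the single concrete degree-$2$ case: a double cover of $\PP^2$ branched along a smooth quartic. There it proves directly (Proposition~\ref{j-fibers}) that the $J$-map has irreducible general fiber, by an explicit local computation of the cross-ratio near a simple tangent line (showing $J$ has a simple pole along the smooth part of $C^*$) together with Stein factorization and Lemma~\ref{lemma:Stein}. So connectedness of the $J$-fiber is established once, by calculation, rather than by a genericity argument at each degree.
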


First recall some notation and definitions.

If $X$ is an algebraic variety and \Rs is a coherent sheaf of reduced
$\Ooo_X$-algebras, we denote its relative spectrum (which is a scheme
over $X$) by $\Specb\Rs$ (under our assumptions $\Specb\Rs$ is an
algebraic variety and the canonical morphism $\Specb\Rs\to X$ is
finite).

If $p\in\PP^n$ is a point and $L\subset\PP^n$ is a linear subspace,
then $\overline{p,L}$ denotes the linear span of $\{p\}\cup L$.

If \lst A4 are points on the affine line with coordinates \lst a4,
then by their cross-ratio we mean
\[
[A_1,A_2,A_3,A_4]=\frac{a_3-a_1}{a_3-a_2}\bigg/\frac{a_4-a_1}{a_4-a_2}.
\]

If $X\subset\PP^n$ is a smooth
projective variety and $X^*\subset(\PP^n)^*$ is its projective dual,
one can define the ``universal smooth hyperplane section of $X$'',
that is, the family
\begin{equation}\label{eq:univ.hyper}
\U_X=\{(x,\alpha)\in X\times((\PP^n)^*\setminus X^*)\colon x\in H_\alpha\},
\end{equation}
where $H_\alpha\subset\PP^n$ is the hyperplane corresponding to the
point $\alpha\in(\PP^n)^*$. The morphism $\pi\colon
(x,\alpha)\mapsto\alpha$ makes \X a smooth family of $n$-dimensional
projective varieties over $(\PP^n)^*\setminus X^*$; for any natural
$d$, this family induces a monodromy action of
$\pi_1((\PP^n)^*\setminus X^*)$ on $H^d(Y,\Z)$, where $Y$ is a smooth
hyperplane section of~$X$.

In the above setting, the image of $\pi_1((\PP^n)^*\setminus X^*)$ in
the group $\Aut(H^n(Y,\Z))$ will be called \emph{hyperplane monodromy
  group} of~$X$.

\begin{lemma}\label{iso_proj}
Suppose that $X\subset\PP^n$ is a smooth projective variety and that
$p\in\PP^n\setminus X$ is a point such that the projection with center
$p$ induces an isomorphism $\pi_p\colon X\to X'\subset\PP^{N-1}$. If
$H\ni p$ is a hyperplane that is transversal to $X$, then, after
identifying $Y=X\cap H$ with $Y'=\pi_p(Y)=X'\cap\pi_p(H)$, the
hyperplane monodromy groups acting on $H^n(Y,\Z)$ and $H^n(Y',\Z)$,
are the same.
\end{lemma}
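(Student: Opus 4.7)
The plan is to identify hyperplanes of $\PP^n$ passing through $p$ with points of $(\PP^{n-1})^*$ via the projection $\pi_p$, match the two universal hyperplane section families, and then invoke a Zariski--Lefschetz type argument to transfer the full monodromy group.

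I would begin by letting $L\subset(\PP^n)^*$ denote the hyperplane consisting of those hyperplanes of $\PP^n$ that pass through~$p$. Sending $H\in L$ to $\pi_p(H)\subset\PP^{n-1}$ defines an isomorphism $\Phi\colon L\to(\PP^{n-1})^*$. Because $\pi_p|_X\colon X\to X'$ is an isomorphism, the restriction of $\pi_p$ induces a bijection $H\cap X\cong\pi_p(H)\cap X'$ for each $H\in L$ (any point of $X'\cap\pi_p(H)$ lifts uniquely to $X$, and the preimage automatically lies in~$H$ because the line from~$p$ through a point of $H$ is contained in $H$); in particular, $H$ is transverse to $X$ if and only if $\pi_p(H)$ is transverse to $X'$. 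Thus $\Phi$ restricts to an isomorphism $L^\circ:=L\setminus X^*\xrightarrow{\sim}(\PP^{n-1})^*\setminus X'^*$, and the assignment $(x,H)\mapsto(\pi_p(x),\pi_p(H))$ yields an isomorphism between the restriction $\U_X|_{L^\circ}$ of the universal family of $X$ and the universal family $\U_{X'}$ of~$X'$, compatible with the identification of~$Y$ and~$Y'$.

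As a consequence, the image of $\pi_1(L^\circ)$ in $\Aut(H^n(Y,\Z))$ arising from $\U_X|_{L^\circ}$ coincides with the hyperplane monodromy group of~$X'$. Since this image factors through the composition $\pi_1(L^\circ)\to\pi_1((\PP^n)^*\setminus X^*)\to\Aut(H^n(Y,\Z))$, it is \emph{a priori} contained in the hyperplane monodromy group of~$X$; one direction of the desired equality is thus automatic.

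The remaining, and what I expect to be the main, obstacle is the reverse inclusion: one must show that the inclusion $L^\circ\hookrightarrow(\PP^n)^*\setminus X^*$ induces a surjection on fundamental groups. The natural tool is the Zariski--Lefschetz hyperplane section theorem for complements of closed subvarieties in projective space (in the form due to Hamm--L\^e). For a \emph{generic} hyperplane in $(\PP^n)^*$ the surjectivity is immediate, but here $L$ is a specific hyperplane, so one must either check transversality of $L$ to a Whitney stratification of $X^*$ directly, or argue by perturbing~$p$ slightly (within $\PP^n\setminus X$, so that the isomorphism hypothesis persists) and invoke the topological invariance of the hyperplane monodromy group under such deformations.
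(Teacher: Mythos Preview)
Your reduction is correct and matches the paper's: identify the hyperplane $L=H_p\subset(\PP^n)^*$ of hyperplanes through~$p$ with $(\PP^{n-1})^*$, match the two universal families over $L^\circ$, and reduce everything to the surjectivity of $\pi_1(L^\circ)\to\pi_1((\PP^n)^*\setminus X^*)$.

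Where you stop short is precisely the point where the paper has a clean observation you have not found. You treat $L$ as a ``specific'' hyperplane and propose either a full Whitney--stratification check or a perturbation of~$p$. Neither is needed. By biduality $(X^*)^*=X$, if $H_p$ were tangent to $X^*$ at a smooth point, then $p$ would lie on $(X^*)^*=X$, contradicting $p\notin X$. Hence $H_p$ is \emph{automatically} transversal to the smooth locus of $X^*$. The paper then picks a line $\ell\subset H_p$ transversal to the smooth part of $(X')^*=X^*\cap H_p$ and missing its singular locus; the transversality of $H_p$ to $(X^*)_{\mathrm{sm}}$ forces $\ell$ to be transversal to $(X^*)_{\mathrm{sm}}$ as well. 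The classical Zariski theorem now gives that $\pi_1(\ell\setminus X^*)$ surjects onto both $\pi_1(H_p\setminus(X')^*)$ and $\pi_1((\PP^n)^*\setminus X^*)$, and the desired surjection follows.

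Your suggested fallbacks are not wrong in spirit, but the first (Hamm--L\^e with a stratification check) is heavier than necessary, and the second is incomplete as stated: perturbing $p$ to a nearby $p'$ changes the projected surface $X'$ as well, so you would still owe an argument that the hyperplane monodromy of $\pi_{p'}(X)$ coincides with that of $\pi_p(X)$ before you could transfer anything back to the original~$p$.
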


The proof that is sketched below was suggested to me by Jason Starr.

\begin{proof}[Sketch of proof]
Denote by $H_p\subset(\PP^n)^*$ the hyperplane corresponding to the
point $p\in\PP^n$. It is clear that $H_p$ is naturally isomorphic to
$(\PP^{N-1})^*$ and that $(X')^*=X^*\cap H_p$. Moreover, the
hyperplane $H_p$ is transversal to $X^*$ at any smooth point of $X^*$
(indeed, if $H_p$ is tangent to $X^*$ at a smooth point, then $p\in
(X^*)^*=X$, which contradicts the hypothesis).

To prove the lemma it suffices to show that
$\pi_1(H_p\setminus(X')^*)$ surjects onto $\pi_1((\PP^n)^*\setminus
X^*)$. To that end observe that there exists a line $\ell\subset H_p$
that is transversal to the smooth part of $X^*\cap H_p=(X')^*$ (in
particular, $\ell$ does not pass through singular points of $X^*\cap
H_p$). It follows from the transversality of $H_p$ to the smooth part
of $X^*$ that $\ell$ is transversal to the smooth part of $X^*$,
too. Thus, $\pi_1(\ell\setminus X^*)$ surjects both onto
$\pi_1(H_p\setminus(X')^*)$ and onto $\pi_1((\PP^n)^*\setminus X^*)$,
whence the desired surjectivity.
\end{proof}

Lemma~\ref{iso_proj} implies that when studying hyperplane monodromy
groups one may always assume that the variety in question is embedded
by a complete linear system. Recall that if a Del Pezzo
surface~$X\subset\PP^n$ is embedded by the complete linear system
$|-K_X|$ then $\deg X=n\le9$; besides, if $n>3$, $p\in X$ is a general
point, and $\bar X$ is the blow-up of $X$ at $p$, then the projection
$\pi_p\colon X\dasharrow \PP^{n-1}$ induces an isomorphism
$\bar\pi_p\colon \bar X\to X'=\overline{\pi_p(X)}\subset \PP^{n-1}$
and $X'\subset \PP^{n-1}$ is a Del Pezzo surface embedded by
$|-K_{X'}|$.

\begin{lemma}\label{n->3}
In the above setting, suppose that the hyperplane monodromy group of
$X'$ is the entire $\SL(2,\Z)$. Then the hyperplane monodromy group of
$X$ is the entire $\SL(2,\Z)$ as well.
\end{lemma}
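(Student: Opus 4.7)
The plan is to exhibit, inside the universal family of smooth hyperplane sections of $X$, a subfamily isomorphic to the universal family of smooth hyperplane sections of $X'$, so that the hyperplane monodromy of $X'$ appears as a subgroup of that of $X$. Let $V=(\PP^n)^*\setminus X^*$ and $V'=(\PP^{n-1})^*\setminus (X')^*$, and let $H_p\subset(\PP^n)^*$ denote the hyperplane of hyperplanes in $\PP^n$ through~$p$. Projection from $p$ induces an isomorphism $\ph\colon H_p\to(\PP^{n-1})^*$ sending $H$ to the unique $H'\subset\PP^{n-1}$ whose cone through $p$ equals~$H$. Set $V_p:=V\cap H_p$.

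The key claim is that $\ph(V_p)=V'$ and that under this identification the restriction $\U_X|_{V_p}$ is isomorphic to $\U_{X'}$ via~$\bar\pi_p$. For $H\in V_p$, necessarily $H\not\supset T_pX$ (otherwise $H\cap X$ would be singular at~$p$), so $H\cap X$ is smooth at~$p$; its strict transform $\bar C\subset\bar X$ is then isomorphic to $H\cap X$ and lies in $|-K_{\bar X}|$, whence $\bar\pi_p(\bar C)$ is an anticanonical curve on $X'$ that one verifies to coincide with $H'\cap X'$, proving $H'\in V'$. Conversely, if $H\cap X$ is singular, either $H\supset T_pX$, in which case $H'\supset\ell':=\bar\pi_p(E)$ and $H'\cap X'$ is reducible, or else $H\cap X$ has a singularity at some $q\ne p$, which transports via $\bar\pi_p$ to a singularity of $H'\cap X'$ at $\bar\pi_p(\tilde q)$; in both cases $H'\notin V'$. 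Hence $\ph(V_p)=V'$, and the fibrewise isomorphism $H\cap X\cong\bar C\cong H'\cap X'$ assembles into the claimed isomorphism of families.

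It follows that the monodromy of $\pi_1(V_p)$ acting on the integral $H^1$ of a fibre coincides with the hyperplane monodromy of $X'$, which by hypothesis equals $\SL(2,\Z)$. The inclusion $V_p\hookrightarrow V$ induces $\pi_1(V_p)\to\pi_1(V)$ compatible with these monodromy representations, so the image of $\pi_1(V)$ in $\Aut(H^1(H\cap X,\Z))$ contains $\SL(2,\Z)$. Being contained in $\SL(2,\Z)$ (since smooth hyperplane sections of $X$ are curves of genus~$1$ and monodromy preserves the intersection form), it equals $\SL(2,\Z)$. The main technical point is the identification $\ph(V_p)=V'$ together with the family isomorphism, which turns on the analysis of singularities of hyperplane sections of $X$ near~$p$ and their behaviour under projection; once this is in place, the conclusion is formal.
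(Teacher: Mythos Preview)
Your proof is correct and takes essentially the same route as the paper: restrict to hyperplanes through $p$, identify the resulting subfamily with the universal family of smooth hyperplane sections of $X'$, and conclude via the map $\pi_1(V_p)\to\pi_1(V)$. The only difference is that you establish the full equality $\ph(V_p)=V'$, whereas the paper asserts only the inclusion $\ph(V_p)\subset V'$ (equivalently, that $\overline{p,h}$ transversal to $X$ implies $h$ transversal to $X'$) and then invokes the surjectivity of $\pi_1$ for nonempty Zariski open inclusions; your converse argument is correct but not needed for the conclusion.
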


\begin{proof}
Informally the proof may be summed up in one phrase: if variation of
hyperplanes passing through $p$ and transversal to $X$ is enough to
obtain the entire group $\SL(2,\Z)$, then \emph{a fortiori} this is
the case for all hyperplanes transversal to~$X$. A~formal argument
follows.

Assume that the $\PP^{n-1}$ into which the surface $X$ is projected is
a hyperplane in $\PP^n$, $\PP^{n-1}\not\ni x$. If a hyperplane
$H\subset\PP^n$ contains the point $p\in X$, then $H=\overline{p,h}$,
where $h=H\cap\PP^{n-1}$ is a hyperplane in $\PP^{n-1}$. If $H$ is
transversal to $X$, then $\pi_p$ induces an isomorphism between the
curve $H\cap X$, which is a smooth hyperplane section of $X$, and the
curve $h\cap X'$, which is a smooth hyperplane section of $X'$. Put
\[
V=\{\alpha\in (\PP^{n-1})^*\colon \text{$\overline{p,h_\alpha}$ is
  transversal to $X$}\},
\]
where $h_\alpha\subset\PP^{n-1}$ is the hyperplane corresponding to
the point $\alpha\in (\PP^{n-1})^*$, and set
\[
\U=\{(\alpha,x)\in V\times X'\colon x\in h_\alpha\}.
\]
In the diagram
\begin{equation}\label{diag2x3}
\xymatrix{
{\U_{X'}}\ar[r]\ar[d]^{q'}&{\U}\ar[r]\ar[d]&{\U_X}\ar[d]^q\\
{(\PP^{n-1})^*\setminus(X')^*}&V\ar[r]^(0.3)r\ar@{_{(}->}[l]_(0.25)j\ar[r]&
  {(\PP^n)^*\setminus X^*}
}
\end{equation}
where $\U_{X'}$ and $\U_X$ are universal smooth hyperplane sections of
$X'$ and $X$, $j$ is an open embedding, and $r$ maps a hyperplane
$h_\alpha\subset \PP^{n-1}$ to the hyperplane $\overline{p,h_\alpha}$
in $\PP^n$, both squares are Cartesian. Pick a point $\alpha\in V$;
the hyperplane $\overline{p,h_\alpha}\subset\PP^n$ is $H_{r(\alpha)}$,
where $r(\alpha)\in(\PP^n)^*$. If $Y'={q'}^{-1}(\alpha)$ and
$Y=X\cap H_{r(\alpha)}=q^{-1}(r(\alpha))$, then in the
commutative diagram
\[
\xymatrix{
{\pi_1((\PP^{n-1})^*\setminus (X')^*,\alpha)}\ar[dr]_u&
    {\pi_1(V,\alpha)}\ar[r]\ar[d]\ar[l]_(.35){w}&{\pi_1((\PP^n)^*\setminus
      X^*,r(\alpha))}\ar[d]^v\\
 & {\Aut(H^1(Y',\Z))}\ar@{=}[r]&{\Aut(H^1(Y,\Z))}
}
\]
the mapping $w$ is an epimorphism since $V$ is Zariski open in
$(\PP^{n-1})^*\setminus (X')^*$, whence $\im u\subset\im v$. This
proves the lemma.
\end{proof}

Projecting Del Pezzo surfaces in $\PP^n$, $n>3$, consecutively from
general points on them, one arrives at a cubic in $\PP^3$;
Lemma~\ref{n->3} implies that it suffices to prove Proposition~\ref{delPezzo}
for this surface.

The next lemma reduces the problem to the case of ``Del Pezzo surfaces of
degree~$2$''.

Suppose that $X\subset\PP^3$ is a smooth cubic and $p\in X$ is a
general point. Let $\bar X$ be the blow-up of $X$ at $p$.  Then the
projection $\pi_p\colon \PP^3\dasharrow \PP^2$ induces a finite
morphism $\bar\pi_p\colon\bar X\to\PP^2$ of degree~$2$; the branch
locus of this morphism is a smooth curve $C\subset\PP^2$ of
degree~$4$. For $\alpha\in (\PP^2)^*$, denote the corresponding line
by $\ell_\alpha\subset \PP^2$. If $\ell_\alpha$ is transversal to $C$
(i.e., $\alpha\notin C^*$), then $\bar\pi_p^{-1}(\ell_\alpha)$ is
smooth, irreducible, and isomorphic to $X\cap
\overline{p,\ell_\alpha}$.

The proof of the following lemma is similar to that of Lemma~\ref{n->3}.
\begin{lemma}\label{3->2}
Put 
\begin{equation}\label{eq:def_of_X}
\X=\{(\alpha,x)\in((\PP^2)^*\setminus C^*)\times \bar X\colon
\bar\pi_p(x)\in\ell_\alpha\} 
\end{equation}
and denote the morphism $(\alpha,x)\mapsto\alpha$ by $q\colon \X\to
(\PP^2)^*\setminus C^*$. If $\Mon(\X,\Z)=\SL(2,\Z)$, then the
hyperplane monodromy group of $X$ is also equal to $\SL(2,\Z)$.
\end{lemma}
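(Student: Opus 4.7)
The plan is to exhibit the family $\X$ as the pullback of the universal smooth hyperplane section $\U_X\to(\PP^3)^*\setminus X^*$ along a naturally defined morphism $r$ of parameter spaces, and then to extract the conclusion from the ensuing containment of monodromy groups, in the spirit of Lemma~\ref{n->3}.

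The morphism $r\colon(\PP^2)^*\setminus C^*\to(\PP^3)^*\setminus X^*$ will send $\alpha$ to $[\overline{p,\ell_\alpha}]$; it is the restriction of the linear embedding $(\PP^2)^*\hookrightarrow(\PP^3)^*$ that identifies $(\PP^2)^*$ with the locus of hyperplanes through $p$. Its image lies in $(\PP^3)^*\setminus X^*$: for $\alpha\notin C^*$ the line $\ell_\alpha$ is transversal to $C$, so $\bar\pi_p^{-1}(\ell_\alpha)$ is smooth, and by the identification recorded just before the lemma this curve coincides with $X\cap\overline{p,\ell_\alpha}$, so the latter is smooth and $\overline{p,\ell_\alpha}\notin X^*$.

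The crux is to check that the blowdown $\sigma\colon\bar X\to X$ induces an isomorphism of families $\X\xrightarrow{\sim}r^*\U_X$ over $(\PP^2)^*\setminus C^*$. Fiberwise, for $\alpha$ with $\ell_\alpha\ne\ell_0:=\bar\pi_p(E)$, the fiber $\bar\pi_p^{-1}(\ell_\alpha)$ meets the exceptional curve $E$ in a single point (which $\sigma$ sends to $p$), so $\sigma$ restricts to a birational morphism of smooth curves of genus $1$, hence an isomorphism, $\bar\pi_p^{-1}(\ell_\alpha)\xrightarrow{\sim}X\cap\overline{p,\ell_\alpha}$. These isomorphisms come from a single globally defined morphism, so they assemble into an isomorphism of families, at least over the Zariski open subset $\{\alpha:\ell_\alpha\ne\ell_0\}$ of the parameter space; by Proposition~\ref{prop:epi} it suffices to work over this open subset for the purpose of computing monodromy.

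Once $\X\cong r^*\U_X$ is established, the monodromy representation of $\X$ factors as the composition of $r_*\colon\pi_1((\PP^2)^*\setminus C^*)\to\pi_1((\PP^3)^*\setminus X^*)$ with the monodromy representation of $\U_X$, so $\Mon(\X,\Z)$ lies inside the hyperplane monodromy group $M$ of $X$. Since $M$ preserves the symplectic intersection pairing on $H^1$ of a smooth curve of genus $1$, one has $M\subset\SL(2,\Z)$. The hypothesis $\Mon(\X,\Z)=\SL(2,\Z)$ therefore forces $\SL(2,\Z)\subset M\subset\SL(2,\Z)$, i.e., $M=\SL(2,\Z)$, as required. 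The main obstacle is the family-level identification $\X\cong r^*\U_X$; the global construction of the blowdown together with the fact that the fibers are smooth genus-one curves makes this step essentially formal once one restricts to the open locus where $\ell_\alpha\ne\ell_0$.
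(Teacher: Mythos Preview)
Your proof is correct and follows precisely the approach the paper intends: the paper's own ``proof'' of this lemma is simply the remark that it is similar to that of Lemma~\ref{n->3}, and what you have written is exactly the natural adaptation of that argument---define the map $r$ to the hyperplane parameter space, identify $\X$ (over a Zariski open) with $r^*\U_X$, and conclude by the containment of monodromy images. One harmless redundancy: your restriction to the locus $\{\ell_\alpha\ne\ell_0\}$ is unnecessary, since $\ell_0=\bar\pi_p(E)$ is a bitangent of~$C$ (the preimage $\bar\pi_p^{-1}(\ell_0)=E\cup C'$ splits), so the corresponding point already lies in $C^*$ and is excluded from the base to begin with.
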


Our next lemma is valid over algebraically closed fields
of arbitrary characteristic. 

\begin{lemma}\label{lemma:Stein}
Suppose that $W$ is a smooth irreducible variety of dimension~$n$, $L$
is a smooth irreducible curve \textup(we do not assume that $W$ or $L$ is
projective\textup), and $\ph\colon W\to L$ is a proper and surjective morphism
with $(n-1)$-dimensional fibers. Put $Z=\Specb \ph_*\Ooo_W$ and let
$v\colon Z\to L$ be the natural morphism.

If there exists a point $p\in L$ such that $\ph^{-1}(p)$ is
irreducible and the morphism $\ph$ has maximal rank at a general point
of $\ph^{-1}(p)$, then the natural morphism $v\colon Z\to L$ is an
isomorphism.
\end{lemma}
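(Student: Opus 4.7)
The plan is to use the Stein factorization $\ph=v\circ\ph'$ (so $\ph'\colon W\to Z$ has connected fibers) and show that the local ring $\Ooo_{Z,z}$ over the unique preimage $z$ of $p$ is equal to $\Ooo_{L,p}$; this forces $v$ to have degree $1$ and hence to be an isomorphism.

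First, I would establish that $v$ is finite flat. The sheaf $\ph_*\Ooo_W$ is coherent (as $\ph$ is proper) and torsion-free over $\Ooo_L$ (as $\ph$ is surjective with integral source), hence locally free of some constant rank $d$ on the smooth curve $L$; so $v$ is finite flat of degree $d$. Irreducibility of $\ph^{-1}(p)$, hence connectedness, combined with the partition $\ph^{-1}(p)=\bigsqcup_{\zeta\in v^{-1}(p)}(\ph')^{-1}(\zeta)$ into connected fibers of $\ph'$, forces $v^{-1}(p)$ to consist of a single topological point~$z$, so $\Ooo_{Z,z}=(\ph_*\Ooo_W)_p$ is local.

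Next, I would show $\Ooo_{Z,z}$ is a DVR. For any open $U\subset L$ the ring $\Ooo_W(\ph^{-1}(U))$ is a subring of the function field $K(W)$ (since $W$ is integral), so $\Ooo_{Z,z}\hookrightarrow K(W)$ and $\Ooo_{Z,z}$ is a domain; and since $W$ is smooth, hence normal, any $f\in K(W)$ integral over $\Ooo_{Z,z}$ satisfies an integral equation over $\Ooo_W(\ph^{-1}(U))$ for some $U$, hence lies in that ring by normality of $W$, hence in $\Ooo_{Z,z}$. Thus $\Ooo_{Z,z}$ is a $1$-dimensional normal local domain, i.e., a DVR; letting $t$ be its uniformizer, $\pi$ a uniformizer of $\Ooo_{L,p}$, and $e\ge 1$ the ramification index at $z$, we have $\pi=u\,t^e$ with $u$ a unit, and a direct computation of the length of the fibre $\Ooo_{Z,z}/\pi\Ooo_{Z,z}$ over the algebraically closed field $k$ yields $d=e$.

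The smoothness hypothesis then finishes things. Pick a point $x\in\ph^{-1}(p)$ at which $\ph$ is smooth; since $W$ and $L$ are both smooth, this means $\pi$ is part of a regular system of parameters of $\Ooo_{W,x}$, so $\pi\notin\mathfrak{m}_{W,x}^2$. But the local homomorphism $\Ooo_{Z,z}\to\Ooo_{W,x}$ induced by $\ph'$ sends $t\in\mathfrak{m}_{Z,z}$ into $\mathfrak{m}_{W,x}$, so $\pi=u\,t^e\in\mathfrak{m}_{W,x}^e$, forcing $e=1$. Hence $d=1$ and $v$ is an isomorphism. The only delicate step is the DVR property of $\Ooo_{Z,z}$, which rests on pulling integral equations back to $W$ via its normality; once that is in hand, the comparison of orders of vanishing at $x$ is one line.
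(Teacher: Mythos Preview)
Your argument is correct and follows essentially the same route as the paper's proof: both reduce to showing that $v^{-1}(p)$ is a single point $z$ (via connectedness of $\ph^{-1}(p)$) and that the uniformizer of $\Ooo_{L,p}$ already generates $\mathfrak m_{Z,z}$ (via the maximal-rank hypothesis and normality of $W$). The only organizational difference is that you first prove $\Ooo_{Z,z}$ is a DVR and then compute the ramification index $e=1$ from $\pi\notin\mathfrak m_{W,x}^2$, whereas the paper argues directly that $v^*\tau$ divides every element of $\mathfrak m_{Z,z}$ inside $\Ooo_W$; both rest on the same normality-of-$W$ trick, and both finish by concluding $\deg v=1$.
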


\begin{proof}
It is clear that $Z$ is an irreducible and reduced curve.  Since $\ph$
is proper and $\ph^{-1}(p)$ is connected, the stalk $(\ph_*\Ooo_W)_p$
is a local ring, so $v^{-1}(p)$ consists of one point; denote this
point by~$z$.  I claim that that $z$ is a smooth point of~$Z$ and the
morphism $v$ is unramified at~$z$. Indeed, let $\tau\in\Ooo_{L,p}$ be
a generator of the maximal ideal. Its image $v^*\tau\in\Ooo_{Z,z}$ can
be represented by a regular function $f\in\Ooo_W(\ph^{-1}(U))$, where
$U\subset L$ is a Zariski neighborhood of~$p$. Since the morphism
$\ph$ has maximal rank at a general point of $\ph^{-1}(p)$, the
function $v^*\tau$ vanishes on the irreducible divisor $\ph^{-1}(p)$
with multiplicity~$1$. Since regular functions on $\ph^{-1}(U)$ must
be constant on the fibers of the proper morphism~$\ph$, any element of
the maximal ideal of the local ring $\Ooo_{Z,z}$ is representable by a
regular function $g\in\Ooo_W(\ph^{-1}(V))$, where $V$ is a Zariski
neighborhood of $p$, such that the zero locus of $g$ in $\ph^{-1}(V)$
coincides with $u^{-1}(z)$. Hence, $v^*\tau$ generates the maximal
ideal of $\Ooo_{Z,z}$, which proves our claim.

Since $v^{-1}(p)=\{z\}$, $Z$ is smooth at $z$, and $v$ is unramified
at $z$, we conclude that the finite morphism $v$ has degree~$1$. Since
$L$ is smooth, Zariski main theorem implies that $v$ is an
isomorphism.
\end{proof}

\begin{proposition}\label{j-fibers}
Suppose that $\pi\colon X\to\PP^2$ is a finite morphism of degree $2$
branched over a smooth quartic $C\subset \PP^2$, where $X$ is
smooth. If $J\colon (\PP^2)^*\setminus C^*\to\Aa^1$ is the morphism
$\alpha\mapsto j(\pi^{-1}(\ell_\alpha))$, where $\ell_\alpha$ is the
line in $\PP^2$ corresponding to $\alpha\in(\PP^2)^*$, then a general
fiber of $J$ is irreducible.
\end{proposition}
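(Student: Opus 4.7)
The plan is to apply Lemma~\ref{lemma:Stein}. Let me set $Y=(\PP^2)^*\setminus C^*$ and extend the morphism $J\colon Y\to\Aa^1$ to a proper surjective morphism of a smooth projective surface. The rational map $J\colon (\PP^2)^*\dasharrow\PP^1$ has indeterminacies only at the singular points of the dual quartic~$C^*$, namely at its $28$ nodes (coming from bitangents of~$C$) and its $24$ cusps (coming from flex tangents of~$C$); after a sequence of blowups I obtain a smooth projective surface~$W$, a birational morphism $W\to(\PP^2)^*$ restricting to an isomorphism on $Y$, and a proper surjective morphism $\bar J\colon W\to\PP^1$ that agrees with $J$ on~$Y$. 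Since $J$ is dominant to a smooth curve, generic smoothness in characteristic zero guarantees that a general fiber is smooth, so for general fibers connected is equivalent to irreducible; it therefore suffices to show the general fiber of~$\bar J$ is connected.

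By Lemma~\ref{lemma:Stein} applied to $\ph=\bar J$ and $L=\PP^1$, connectedness of the general fiber will follow as soon as I exhibit a single $p_0\in\PP^1$ with $\bar J^{-1}(p_0)$ irreducible and $\bar J$ of maximal rank at a general point of this fiber. The obvious candidate $p_0=\infty$ is not suitable: near a bitangent the four intersection points with~$C$ break up as two colliding pairs, so cross-ratio tends to~$1$ and $j$ tends to infinity along \emph{every} direction of approach, whence $\bar J^{-1}(\infty)$ contains the strict transform of $C^*$ together with the exceptional divisors created over all $28$ nodes, and so is reducible. By contrast, near a cusp of $C^*$ (flex tangent) three branch points collide as $\epsilon(a,b,c)$, and a short computation shows that cross-ratio tends to the finite value $(c-a)/(c-b)$, depending on the direction $(a\colon b\colon c)$; hence the exceptional divisor over a cusp maps surjectively to $\PP^1$, and in particular is \emph{not} contained in any single fiber, so for generic finite $p_0\in\Aa^1$ one has $\bar J^{-1}(p_0)=\overline{J^{-1}(p_0)}$.

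The hard step is therefore to produce a good finite $p_0$. My approach would be to factor the problem through the $S_4$-Galois cover $\tau\colon\tilde Y\to Y$ parameterizing ordered 4-tuples of distinct collinear points on~$C$, which embeds as a smooth irreducible surface in $C^4$ cut out by the (codimension two) collinearity condition; the map $J\circ\tau$ factors as $\tilde Y\xrightarrow{\mathrm{cr}}\Aa^1\xrightarrow{j}\Aa^1$, where $\mathrm{cr}$ is the cross-ratio and $j$ is the standard degree-$6$ cover. Since $\tau$ is surjective with irreducible source, it is enough to show that $\mathrm{cr}$ has irreducible general fibers. The advantage is that $\mathrm{cr}$ is a concrete rational function of coordinates on $C^4$, so one can compactify $\tilde Y$ inside $C^4$, extend $\mathrm{cr}$ to a proper morphism, and try to apply Lemma~\ref{lemma:Stein} again: the degenerate fiber over $\lambda=\infty$ decomposes as the locus $p_2=p_3$ together with $p_1=p_4$ inside the closure, and one would analyse the corresponding components together with the submersion condition on a well-chosen component. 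The main obstacle will be the careful verification that some such component (or some other specifically constructed fiber) is both irreducible and a reduced divisor along which $\mathrm{cr}$ has nonzero differential; once this is achieved, Lemma~\ref{lemma:Stein} delivers the result.
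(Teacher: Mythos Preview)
Your overall strategy (resolve indeterminacy, then apply Lemma~\ref{lemma:Stein}) is exactly the paper's, but you discard the point $p_0=\infty$ for the wrong reason, and this is the gap. You correctly note that near a bitangent $j\to\infty$ along \emph{every} approach, yet that is precisely what makes the rational map $J\colon(\PP^2)^*\dashrightarrow\PP^1$ \emph{regular} at the $28$ nodes of~$C^*$, with value~$\infty$: a rational map from a smooth surface to a curve that has a well-defined limit at a point extends there (equivalently, writing $J=(F:G)$ with $F,G$ coprime, if both vanished at the node then the limits along $\{F=0\}$ and $\{G=0\}$ would differ). Thus the \emph{minimal} resolution of indeterminacy of~$J$ does not blow up the nodes at all; only the cusps are blown up. Your surface $W$ is obtained from the minimal one by $28$ superfluous blowups, and the exceptional curves you introduce there are exactly what makes your $\bar J^{-1}(\infty)$ reducible (indeed non-reduced: locally $1/j\sim st$, so the exceptional divisor of a single blowup at a node enters the fiber with multiplicity~$2$). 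On the minimal resolution the fiber over~$\infty$ is just the strict transform $\tilde C^*$, which is irreducible, and the paper verifies submersivity there by the local estimate $|j|\sim\mathrm{const}/|t|$ at a generic simple tangent. Lemma~\ref{lemma:Stein} with $p=\infty$ then finishes the proof.

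Two smaller remarks. Your description of the cusp resolution is also a bit off: resolving the indeterminacy at a flex tangent requires three successive blowups, and of the three exceptional curves only the last maps surjectively to~$\PP^1$; the first two are contracted to the finite points $j=0$ and $j=1728$ respectively. None lies over~$\infty$, so your conclusion that exceptional material over cusps stays out of $\bar J^{-1}(\infty)$ is correct, but not for the reason stated. Finally, your fallback via the $S_4$-cover $\tilde Y$ and the cross-ratio map is a legitimate reduction (transitivity of $S_4$ on the six cross-ratios does push irreducibility of a $\mathrm{cr}$-fiber down to irreducibility of a $J$-fiber), but you leave the key step unfinished, and once the node issue is corrected the whole detour is unnecessary.
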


\begin{proof}
Let us show that the morphism $J$ extends to a morphism
\[
J_1\colon (\PP^2)^*\setminus(C^*)_{\mathrm{sing}}\to\PP^1=\Aa^1\cup\{\infty\}.
\]
Indeed, if $\ell\subset\PP^2$ is a line and $\ell\cap
C=\{P_1,P_2,P_3,P_4\}$, then the curve $\pi^{-1}(\ell)$ is a curve of
genus~$1$ and
\begin{equation}\label{j:formula}
j(\pi^{-1}(\ell))=256\frac{(\lambda^2-\lambda+1)^3}{\lambda^2(1-\lambda)^2},
\end{equation}
where $\lambda$ is the cross-ratio~$[P_1,P_2,P_3,P_4]$, in no matter
what order (see for example~\cite[Chapter~III,
  Proposition~1.7b]{Silverman}). If $\alpha$ is a smooth point of
$C^*\subset(\PP^2)^*$, then the line $\ell_\alpha$ is tangent to $C$
at exactly one point that is not an inflection point. Thus, as the
line $\ell$ tends to $\ell_\alpha$, exactly two intersection points
from $\ell\cap C$ merge, so the cross-ratio of these four points
tends to $0$ (or~$1$, or~$\infty$, depending on the ordering), and
formula~\eqref{j:formula} shows that $j(\pi^{-1}(\ell))$ tends
to~$\infty$. This proves the existence of the desired extension.

Our argument shows that
$J_1^{-1}(\infty)=C^*\setminus(C^*)_{\mathrm{sing}}$; if we regard
$J_1$ as a rational mapping from $(\PP^2)^*$ to $\PP^1$ and if
\begin{equation}\label{diag:resolution}
\xymatrix{
{W}\ar[dr]^{J_2}\ar[d]^\sigma\\
{(\PP^2)^*}\ar@{-->}[r]^{J_1}&{\PP^1}
}
\end{equation}
is a minimal resolution of indeterminacy for $J_1$, then
$J_2^{-1}(\infty)$ equals the strict transform of
$C^*$ with respect to~$\sigma$.

Now I claim that, at a general point of $J_2^{-1}(\infty)$, derivative
of $J_2$ has rank~$1$. It suffices to prove this assertion for $J_1$
and a general smooth point of $C^*$. To that end it suffices to
construct an analytic mapping $\gamma\colon D\to(\PP^2)^*$, where $D$
is a disk in the complex plane with center at~$0$, such that
$\gamma(D\setminus\{0\})\subset(\PP^2)^*\setminus C^*$, $\gamma(0)$ is
a smooth point of $C^*$, and
$|j(\pi^{-1}(\ell_{\gamma(t)}))|\sim\mathrm{const}/|t|$.

Suppose that a point $c\in C$ is not an inflection point nor a
tangency point of a bitangent; if $\ell_\alpha\subset\PP^2$ is the
tangent line to $C$ at~$c$, then $\alpha$ s a smooth point of
$C^*$. Now choose affine $(x,y)$-coordinates in $\PP^2$ so that
$c=(0,0)$, the tangent $\ell_\alpha$ has equation $y=0$, and
$\ell_\alpha\cap C=\{c,(C,0),(D,0)\}$, where $C,D\ne 0$ (so the
remaining two points of $\ell_\alpha\cap C$ are in the finite part of
$\PP^2$ with respect to the chosen coordinate system). If
$\ell_{\gamma(t)}$ is the line with affine equation $y=t$, then, for
all small enough~$t$, one has $\ell_{\gamma(t)}\cap
C=\{A(t),B(t),C(t),D(t)\}$, where the $x$-coordinates of $A(t)$ and
$B(t)$ are $\sqrt t+o(\sqrt{|t|})$ (for both values of $\sqrt t$),
while the $x$ coordinates of $C(t)$ and $D(t)$ tend to finite and
non-zero numbers~$C$ and~$D$. Hence,
\[
\big|[C(t),A(t),B(t),D(t)]\big|\sim
\frac{\text{const}}{\sqrt{|t|}}\quad\text{as $t\to0$};
\]
formula~\eqref{j:formula} implies that
$|j(\pi^{-1}(\ell_t))|\sim\mathrm{const}/|t|$, as desired.

Let
\[
\xymatrix{
{W}\ar[rr]^{J_2}\ar[dr]_u&&{\PP^1}\\
&Z\ar[ur]_v
}
\]
be the Stein factorization in which $W$ is a blow-up of $(\PP^2)^*$
(see~\eqref{diag:resolution}), $Z=\Specb (J_2)_*\Ooo_{W}$, and $v$ is
a finite morphism. Applying Lemma~\ref{lemma:Stein} with $L=\PP^1$,
$\ph=J_2$, and $p=\infty$, we conclude that $v$ is an
isomorphism. Thus, fibers of $J_2$ coincide with fibers of~$u$; since
the latter are connected, fibers of $J_2$ are connected as well.
Bertini theorem implies that a general fiber of $J_2$ is smooth; since
it is connected, it must be irreducible. This implies that a general
fiber of $J$ is irreducible.
\end{proof}

\begin{proof}[Proof of Proposition~\ref{delPezzo}]
In view of Proposition~\ref{iso_proj} and Lemmas~\ref{n->3}
and~\ref{3->2}, it suffices to prove that $\Mon(\X)=\SL(2,\Z)$, where
\X is the family defined by~\eqref{eq:def_of_X}.

Applying Proposition~\ref{j-fibers} to the surface $\bar X$ (blow-up
of a cubic at a general point~$p$) and the mapping
$\bar\pi_p\colon\bar X\to\PP^2$ (induced by the projection with
center~$p$), we see that the family \X defined by
formula~\eqref{eq:def_of_X} satisfies the hypothesis of
Proposition~\ref{numcompts}(i), whence $\Mon(\X)=\SL(2,\Z)$.
\end{proof}

\begin{note}
Our argument shows as well that if $X$ is a Del Pezzo surface of
degree $2$, then the monodromy group acting on $H^1(\cdot,\Z)$ of
non-singular elements of the anticanonical linear system $|-K_X|$, is
$\SL(2,\Z)$. I do not know the answer for Del Pezzo surfaces of
degree~$1$. 
\end{note}

\bibliographystyle{amsplain}

\bibliography{bib}

\end{document}